\documentclass[siap,onefignum,onetabnum]{siamart190516}

\usepackage{mathrsfs} 


\usepackage{lipsum}
\usepackage{amsfonts}
\usepackage{graphicx}
\usepackage{epstopdf}
\usepackage{algorithmic}
\ifpdf
  \DeclareGraphicsExtensions{.eps,.pdf,.png,.jpg}
\else
  \DeclareGraphicsExtensions{.eps}
\fi


\newsiamremark{remark}{Remark}
\newsiamremark{hypothesis}{Hypothesis}
\crefname{hypothesis}{Hypothesis}{Hypotheses}
\newsiamthm{claim}{Claim}

\headers{Nonlinear ultrasound imaging}{S. Acosta, G. Uhlmann, and J. Zhai}

\title{Nonlinear ultrasound imaging modeled by a Westervelt equation\thanks{Submitted to the editors DATE.
}}

\author{Sebastian Acosta\thanks{Department of Pediatrics-Cardiology, Baylor College of Medicine and Texas Children's Hospital, Texas, USA   (\email{sebastian.acosta@bcm.edu})}
\and Gunther Uhlmann\thanks{Department of Mathematics, University of Washington, Seattle, WA 98195, USA; Institute for Advanced Study, 
The Hong Kong University of Science and Technology, Kowloon, Hong Kong, China 
  (\email{gunther@math.washington.edu}).}
\and Jian Zhai\thanks{School of Mathematical Sciences,
  Fudan University, Shanghai, China (\email{jianzhai@fudan.edu.cn, jian.zhai@outlook.com})} }

\usepackage{amsopn}


\ifpdf
\hypersetup{
  pdftitle={Nonlinear ultrasound imaging modeled by a Westervelt equation},
  pdfauthor={S. Acosta, G. Uhlmann, and J. Zhai}
}
\fi


\externaldocument{ex_supplement}


\begin{document}

\maketitle

\begin{abstract}
We consider the ultrasound imaging problem governed by a nonlinear wave equation of Westervelt type with variable wave speed. We show that the coefficient of nonlinearity can be recovered uniquely from knowledge of the Dirichlet-to-Neumann map. Our proof is based on a second order linearization and the use of Gaussian beam solutions to reduce the problem to the inversion of a weighted geodesic ray transform. We propose an inversion algorithm and report the results of a numerical implementation to solve the nonlinear ultrasound imaging problem in a transmission setting in the frequency domain.
\end{abstract}

\begin{keywords}
  ultrasound imaging, nonlinear acoustic waves
\end{keywords}

\begin{AMS}
  35R30, 92C55
\end{AMS}

\section{Introduction} \label{Section.Intro}

Nonlinear ultrasound waves play an increasingly important role in diagnostic and therapeutic medicine. Improvements are being accomplished for the ultrasonic imaging of blood perfusion in organs and tumors \cite{Thomas1998,Webb2003,Demi2014a,Szabo2004a,Cosgrove2010,Anvari2015,Harrer2003,Hedrick2005}, high-intensity focused ultrasound ablation of pathological tissues \cite{terHaar2016,Chapelon2016,Knuttel2016}, and drug/gene delivery using micro/nano agent assisted ultrasound \cite{Bettinger2016,Bouakaz2016b,Castle2016}. We are also motivated by the portability of ultrasound-based technologies which makes them ideal for monitoring patients in the operating room. For instance, there is an alarmingly high incidence of brain and kidney damage in neonates who undergo open-heart surgery to palliate congenital heart defects. Depending on the type of cardiac surgery, several studies have documented an incidence of organ injury ranging from 35\% to 75\% \cite{Andropoulos2019,Fang2019,Gaynor2015}. This problem reveals an unresolved need for better and portable imaging techniques to monitor perfusion during surgical procedures. 

The complexity of physiological media surrounding blood vessels limits the application of ultrasound to assess perfusion status. In order to reduce the influence of media heterogeneity and enhance the visualization of blood vessels, special microbubble contrast agents have been designed \cite{Webb2003,Demi2014a,Szabo2004a,Soldati2014,Cosgrove2010,Anvari2015,Demi2014,Demi2014a,Eyding2020}. Once injected in the blood stream, these enhancing agents induce a nonlinear response upon interaction with ultrasound waves. The nonlinearity generates vibration frequencies different from the isonating frequency. This effect can be measured and processed to form images of the source of nonlinearity while simultaneously filtering out some of the confounding interaction with the heterogeneous media.

The main goal of this paper is to contribute to the mathematical understanding of quantitative nonlinear ultrasound imaging. Our objective is to determine whether boundary measurements of the ultrasound field can uniquely determine the coefficient of nonlinearity in the wave equation. Our starting point is a lossless nonlinear wave equation of Westervelt type \cite{Szabo2004a,Demi2014a,Demi2014,Mischi2014} governing the propagation of waves in a bounded domain $\Omega\subset\mathbb{R}^3$ with smooth boundary $\partial \Omega$. This model has the following form,
\begin{equation}\label{eq_main}
\begin{split}
\frac{1}{c^2(x)}\partial_t^2p(t,x)-\Delta p(t,x)-\beta(x)\partial_t^2p^2(t,x)&=0,\quad\text{in }(0,T)\times\Omega,\\
p(t,x)&=f,\quad\text{on }(0,T)\times\partial\Omega,\\
p=\frac{\partial p}{\partial t}&=0,\quad\text{on }\{t=0\},
\end{split}
\end{equation}
where $p$ denotes the pressure field, $c=c(x)>0$ is the wave speed, $\beta=\beta(x)$ is the coefficient of nonlinearity, and $f$ is the isonation profile on the boundary of $\Omega$. We assume $c$ and $\beta$ are both smooth functions on $\overline{\Omega}$.

The pairing between isonating profiles and response measurements is encoded in the Dirichlet-to-Neumann (DtN) map $\Lambda$ defined as
\[
\Lambda f=\partial_\nu p\vert_{(0,T)\times\partial\Omega},
\]
where $\nu$ is the outer unit normal vector to $\partial \Omega$. Note that $\Lambda$ is well defined on $f\in \mathcal{C}^{6}((0,T)\times\partial\Omega)$ satisfying $\|f\|_{\mathcal{C}^{6}((0,T)\times\partial\Omega)}\leq \epsilon_0$ with $\epsilon_0>0$ small enough, following from the well-posedness of the initial boundary value problem \eqref{eq_main} established in Section \ref{Section.IBVP}. Hence, mathematically, the nonlinear ultrasound imaging problem can be described as the inverse problem of recovering the coefficient of nonlinearity $\beta(x)$ from the boundary measurements encoded in $\Lambda$.

Inverse problems for nonlinear hyperbolic equations have been extensively studied since the work \cite{kurylev2018inverse}, see \cite{uhlmann2019inverse,uhlmann2020inverse,hintz2020inverse,feizmohammadi2020inverse,hintz2021dirichlet,uhlmann2021inverse,lassas2021stability} and the references therein for recent progress.

The main result of this paper can be summarized in precise terms as follows.

\begin{theorem}\label{mainthm}
Consider the Riemannian metric $g=c^{-2}\mathrm{d}s^2$ on $\Omega$ associated with the wavespeed $c$. 
Assume that the Riemannian manifold $(\Omega,g)$ is non-trapping, i.e.,
\[
\mathrm{diam}_g(\Omega)=\sup\{\text{lengths of all geodesics in }(\Omega,g)\} < \infty.
\]
Also assume that $T>\mathrm{diam}_g(\Omega)$, $\partial\Omega$ is strictly convex with respect to $g$, and either of the following conditions holds
\begin{enumerate}
\item[1.] $(\Omega,g)$ has no conjugate points;
\item[2.] $(\Omega,g)$ satisfies the foliation condition introduced in \textnormal{\cite{uhlmann2016inverse}}.
\end{enumerate}
Then $\Lambda$ determines $\beta$ in $\overline{\Omega}$ uniquely.
\end{theorem}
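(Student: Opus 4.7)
My plan is to combine a second order linearization of the Dirichlet-to-Neumann map with a threefold Gaussian beam concentration along a single geodesic of the background metric $g=c^{-2}\mathrm{d}s^2$, thereby reducing the recovery of $\beta$ to the inversion of a weighted geodesic ray transform on $(\Omega,g)$.

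The first step is the second order linearization. I would feed the Dirichlet data $f=\epsilon_1 f_1+\epsilon_2 f_2$ with $(\epsilon_1,\epsilon_2)$ in a small neighborhood of the origin in $\mathbb{R}^2$; by the well-posedness established in Section \ref{Section.IBVP}, the solution $p^\epsilon$ of \eqref{eq_main} depends smoothly on $\epsilon$. Setting $v_j:=\partial_{\epsilon_j} p^\epsilon|_{\epsilon=0}$ and $w:=\partial_{\epsilon_1}\partial_{\epsilon_2} p^\epsilon|_{\epsilon=0}$, differentiation of \eqref{eq_main} shows that each $v_j$ solves the linear wave equation with Dirichlet data $f_j$ and vanishing initial data, while $w$ satisfies
\[
c^{-2}\partial_t^2 w-\Delta w = 2\beta\,\partial_t^2(v_1 v_2),\quad w|_{(0,T)\times\partial\Omega}=0,\quad w|_{t=0}=\partial_t w|_{t=0}=0.
\]
The mixed derivative $\partial_{\epsilon_1}\partial_{\epsilon_2}\Lambda(\epsilon_1 f_1+\epsilon_2 f_2)|_{\epsilon=0}=\partial_\nu w|_{(0,T)\times\partial\Omega}$ is therefore known from $\Lambda$. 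Pairing the equation for $w$ against a solution $v_3$ of the linear wave equation with vanishing Cauchy data at $t=T$, and integrating by parts, I obtain the integral identity
\[
\int_0^T\!\!\int_\Omega \beta\,\partial_t^2(v_1 v_2)\,v_3\,\mathrm{d}x\,\mathrm{d}t=\Phi(f_1,f_2,v_3|_{\partial\Omega}),
\]
where $\Phi$ is explicitly determined by $\Lambda$ and the boundary traces of the three linear solutions.

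The second step is to choose $v_1,v_2,v_3$ as Gaussian beams. Fix a unit-speed geodesic $\gamma:[s_-,s_+]\to\overline{\Omega}$ of $(\Omega,g)$ with endpoints on $\partial\Omega$, and let $\Gamma(s)=(s-s_-,\gamma(s))$ be its lift to a null curve of the Lorentzian operator $c^{-2}\partial_t^2-\Delta$. I construct three families
\[
v_j^\lambda(t,x)=e^{i\lambda\tau_j\phi(t,x)}a_j(t,x)+r_j^\lambda(t,x),\qquad j=1,2,3,
\]
where $\phi$ is a complex eikonal phase that vanishes on $\Gamma$ with positive imaginary transverse Hessian, the amplitudes $a_j$ solve the associated transport equations and are localized near $\Gamma$, and the remainders $r_j^\lambda$ are $O(\lambda^{-N})$ in suitable norms. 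Choosing the weights $\tau_1=\tau_2=1$ and $\tau_3=-2$ enforces the momentum cancellation $\tau_1+\tau_2+\tau_3=0$, so the combined phase vanishes and the integrand becomes nonoscillatory along $\Gamma$. The factor $\partial_t^2$ brings down $-4\lambda^2(\partial_t\phi)^2$, and stationary phase in the three directions transverse to $\Gamma$ collapses the spacetime integral to a one dimensional integral along $\gamma$: after dividing by the appropriate power of $\lambda$ and passing to the limit, I obtain
\[
\int_{s_-}^{s_+}\beta(\gamma(s))\,\kappa(s)\,\mathrm{d}s = \text{quantity determined by }\Lambda,
\]
for a smooth, nowhere vanishing weight $\kappa$ built from $c$ and the beam amplitudes along $\gamma$. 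The $\mathcal{C}^6$-smallness constraint on the boundary data is accommodated by rescaling each beam by a $\lambda$-dependent prefactor; because the Step 1 identity is bilinear in $(f_1,f_2)$ and linear in $f_3$, this rescaling cancels out upon differentiation in $\epsilon$.

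The third step is the inversion of the weighted geodesic ray transform $I_\kappa\beta(\gamma):=\int_\gamma\beta\kappa\,\mathrm{d}s$. Under hypothesis~(1), injectivity of $I_\kappa$ on the non-trapping, strictly convex, conjugate-point-free manifold $(\Omega,g)$ is classical via the energy-identity techniques of Mukhometov and the pseudodifferential arguments of Stefanov--Uhlmann and Frigyik--Stefanov--Uhlmann. Under hypothesis~(2), $s$-injectivity follows from the layer-stripping microlocal inversion of \cite{uhlmann2016inverse}, which applies to the smooth, nowhere vanishing weight $\kappa$. In either case $\beta$ is recovered uniquely in $\overline{\Omega}$.

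The principal technical obstacle I anticipate is the construction and analysis of the three Gaussian beams. One must produce three phases that simultaneously solve the eikonal equation of the Lorentzian operator, whose weighted sum vanishes to the order required for stationary phase along $\Gamma$ (which dictates the choice $\tau_j$ above), and whose combined transverse Hessian is non-degenerate with positive imaginary part so that stationary phase delivers a non-vanishing weight $\kappa$. In parallel, one must control the cubic and higher-order errors in the Taylor expansion of $\Lambda$ uniformly in $\lambda$ using the well-posedness estimate of Section \ref{Section.IBVP} after the $\mathcal{C}^6$ rescaling. Each of these ingredients is by now standard in the literature on inverse problems for nonlinear hyperbolic equations, but fitting them together cleanly in this Lorentzian-to-Riemannian reduction is the heart of the argument.
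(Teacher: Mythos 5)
Your Steps 1--2 follow the paper's route (second-order linearization giving the identity $\int\beta\,\partial_t^2(v_1v_2)v_3$, then three Gaussian beams concentrated on the lift of a single geodesic, with the third beam at twice the frequency), but there are two genuine problems. First, a technical one: taking $v_3^\lambda=e^{i\lambda\tau_3\phi}a_3$ with $\tau_3=-2$ and the \emph{same} phase $\phi$ makes the total phase $(\tau_1+\tau_2+\tau_3)\phi$ vanish identically, so the combined imaginary part is $0$ transversally and there is no Gaussian concentration at all --- the integral localizes only to the $\delta$-tube of the cutoff and the $\lambda$-powers do not produce a line integral. The correct choice (as in the paper) is the conjugated beam $u_0=\overline{\mathfrak{a}_{2\rho}}e^{-2i\rho\overline{\varphi}}$, so the real parts of the phases cancel while the imaginary parts add to $4\Im\varphi>0$, yielding the $e^{-4\rho\Im\varphi}$ factor and the $\rho^{3/2}$ normalization that collapses the integral onto $\gamma$. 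This is fixable, but as written your stationary-phase step fails.

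Second, and more seriously, your Step 3 under hypothesis (1) has a gap. The reduction produces a \emph{specific} weight: in the paper it is $c^{3/2}(\det\widetilde{Y})^{-1/2}$, where $\widetilde{Y}$ solves the Jacobi-type equation \eqref{eq_Ytilde} with initial data coming from the beam's Hessian $H_0$. For a single \emph{fixed} smooth nowhere-vanishing weight, injectivity of the geodesic ray transform under only ``non-trapping, strictly convex boundary, no conjugate points'' is not classical: Mukhometov-type energy identities handle the unweighted (or very special) case, and Frigyik--Stefanov--Uhlmann gives injectivity only for real-analytic weights and generic perturbations thereof; Boman's counterexample (a smooth positive weight for the Radon transform in the plane) shows that arbitrary positive smooth weights can destroy injectivity, so your citation does not close the argument. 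The paper avoids this by exploiting the freedom in the beam construction: varying the admissible $\widetilde{Y}$ (i.e.\ the initial condition $H_0$ of the Riccati equation) yields the ``Jacobi weighted ray transform'' data \eqref{jacobi_transform} for \emph{all} admissible weights along one and the same geodesic, and \cite[Proposition 3]{feizmohammadi2019inverse} then recovers $\beta c^{3/2}$ pointwise along that single geodesic --- no global injectivity theorem for a fixed weight is needed. Your treatment of hypothesis (2) is essentially consistent with the paper (inversion of the weighted transform with one fixed smooth nonvanishing weight under the foliation condition, as in \cite{paternain2019geodesic}), provided you actually verify the explicit form and nonvanishing of the weight, which requires the computation of $\det\widetilde{Y}$ that your sketch omits.
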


The foliation condition on $(\Omega,g)$ means that it can be foliated by strictly convex hypersurfaces, or equivalently there exists a smooth strictly convex function $f:\Omega\rightarrow\mathbb{R}$, see \cite{paternain2019geodesic} for more details. We note that the foliation condition allows for caustics (conjugate points). For instance, in the case of radial sound speeds $c=c(r)$, if the Herglotz condition ($\frac{\partial}{\partial r}( \frac{r}{c(r)}) > 0$) is verified, then the foliation condition is satisfied. More generally, if the sound speed is written is polar coordinates $c=c(r,\hat{\textbf{x}})$ and $c$ is an increasing function of $r$, then the foliation condition is also satisfied.

In order to prove this theorem, we first establish the well-posedness of the model \eqref{eq_main} in Section \ref{Section.IBVP}. Then in Section \ref{Section.SecondLinearization} we describe our approach to solve the inverse problem based on a second order linearization of the initial boundary value problem \eqref{eq_main}. This linearization renders an integral identity (see \eqref{integral_id} below) valid for a large family of isonating profiles. In particular, Gaussian beam solutions are employed in Section \ref{Section.GaussBeams} to show the unique recovery of $\beta$ from $\Lambda$ by reducing this problem to the weighted geodesic ray transform. Section \ref{Section.Numerical} is devoted to a numerical implementation based on the analysis for a transmission imaging setting in the frequency domain. Our final remarks are given in Section \ref{Section.Conclusion}.

\section{Well-posedness for small boundary value} \label{Section.IBVP}
In this section, we establish well-posedness of the initial boundary value problem \eqref{eq_main} with small boundary value $f$. Notice that the equation is a quasilinear hyperbolic equation.
\begin{theorem}
Let $T>0$ be fixed. Assume that $f\in \mathcal{C}^{m+1}([0,T]\times\partial \Omega)$, $m\geq 5$, and it vanishes near $\{t=0\}$. Then there exists $\epsilon_0>0$ such that for $\|f\|_{\mathcal C^{m+1}}\leq \epsilon_0$, there exists a unique solution
\[
p\in \bigcap_{k=0}^m\mathcal{C}^k([0,T];\, H^{m-k}(\Omega))
\]
of equation \eqref{eq_main}. It satisfies the estimate
\[
   \sup_{t\in[0,T]}\|\partial^{m-k}_tp(t)\|_{H^{m-k}(\Omega)}\leq C\|f\|_{\mathcal{C}^{m+1}([0,T]\times\partial \Omega)}, 
\]
where $C>0$ is independent of $f$.
\end{theorem}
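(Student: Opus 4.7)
The plan is to treat \eqref{eq_main} as a quasilinear wave equation by expanding
\[
\beta\,\partial_t^2(p^2)=2\beta p\,\partial_t^2 p+2\beta(\partial_t p)^2,
\]
so that the equation takes the form
\[
\bigl(c^{-2}-2\beta p\bigr)\partial_t^2 p-\Delta p=2\beta(\partial_t p)^2.
\]
For $\|p\|_{L^\infty}$ sufficiently small, the coefficient $a(p):=c^{-2}-2\beta p$ stays uniformly positive on $\overline\Omega$, so the principal part remains strictly hyperbolic. As a first step I would extend $f$ to a function $F\in \mathcal{C}^{m+1}([0,T]\times\overline\Omega)$ that still vanishes near $\{t=0\}$ and satisfies $\|F\|_{\mathcal{C}^{m+1}}\leq C\|f\|_{\mathcal{C}^{m+1}}$. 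Writing $p=F+u$ reduces the problem to one for $u$ with homogeneous Dirichlet and initial conditions and a forcing that depends polynomially on $F,u$ and their derivatives.

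Next I would set up a Picard iteration: $u^{(0)}=0$, and inductively define $u^{(n+1)}$ as the solution of the linear hyperbolic equation
\[
a\bigl(F+u^{(n)}\bigr)\partial_t^2 u^{(n+1)}-\Delta u^{(n+1)}=\mathcal{G}\bigl(F,u^{(n)},\partial F,\partial u^{(n)}\bigr),
\]
with zero Dirichlet data on $(0,T)\times\partial\Omega$ and vanishing Cauchy data at $t=0$. Because $f$ vanishes near $\{t=0\}$, all compatibility conditions at $\partial\Omega\cap\{t=0\}$ hold trivially to every order, so the standard theory for linear wave equations with smooth variable coefficients produces $u^{(n+1)}\in \bigcap_{k=0}^m \mathcal{C}^k([0,T];H^{m-k}(\Omega))$ at each iteration.

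The heart of the proof is a high-order energy estimate that is uniform in $n$. For $0\leq j\leq m-1$ I would apply $\partial_t^j$ to the linear equation, which preserves the homogeneous Dirichlet condition on $u^{(n+1)}$, and use $\partial_t^{j+1} u^{(n+1)}$ as an energy multiplier. Integration by parts combined with Gronwall's inequality yields control of $\|\partial_t^{j+1} u^{(n+1)}(t)\|_{L^2}+\|\nabla\partial_t^j u^{(n+1)}(t)\|_{L^2}$, modulo commutator terms between $\partial_t^j$ and the quasilinear coefficient $a(F+u^{(n)})$, which I would absorb using Moser-type tame estimates together with Sobolev embedding in three dimensions (this is where the threshold $m\geq 5$ enters). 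Spatial regularity is then recovered by elliptic regularity applied to $\Delta u^{(n+1)}=a(F+u^{(n)})\partial_t^2 u^{(n+1)}-\mathcal{G}$. Choosing $\epsilon_0$ small enough, these bounds close inside a ball $\{\sup_t\sum_k\|\partial_t^k u\|_{H^{m-k}}\leq M\|f\|_{\mathcal{C}^{m+1}}\}$ that is invariant under the iteration.

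Finally, I would establish convergence by deriving an analogous estimate, one order lower, for the difference $w^{(n)}:=u^{(n+1)}-u^{(n)}$, which solves a linear wave equation whose right-hand side is controlled by $w^{(n-1)}$ in a lower-order Sobolev norm. Smallness of $\epsilon_0$ then gives $\|w^{(n)}\|\leq \tfrac12\|w^{(n-1)}\|$ in, say, the $H^{m-1}$-based energy space, so the iterates converge to a limit $u$; uniform $H^m$ bounds combined with weak-$*$ compactness upgrade $u$ to the regularity class stated in the theorem, and uniqueness follows from the same difference argument applied to two putative solutions. The main technical obstacle is the top-order energy estimate for the quasilinear problem: one must commute $\partial_t^m$ through the variable coefficient $a(F+u^{(n)})$ without losing derivatives, which requires careful use of Moser-type inequalities and is what forces the boundary datum to be $\mathcal{C}^{m+1}$ rather than merely $\mathcal{C}^m$.
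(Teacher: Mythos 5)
Your proposal follows essentially the same route as the paper: extend $f$ into the domain, subtract it to get homogeneous Dirichlet and Cauchy data, write the equation in the quasilinear form $(c^{-2}-2\beta p)\partial_t^2p-\Delta p=2\beta(\partial_t p)^2$, iterate on the linear problem with coefficients frozen at the previous iterate, close uniform $H^m$-level energy bounds for small data, and obtain convergence and uniqueness from a contraction estimate in a lower-order norm. The only inessential differences are that the paper quotes the linear existence and energy theory of Dafermos--Hrusa rather than re-deriving the commutator/Moser estimates, and performs the contraction in the weak $H^1\times L^2$ metric on the bounded set $Z(\rho_0,T)$ rather than in an $H^{m-1}$-based energy space.
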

\begin{proof}
Fix $m\geq 5$. Assume that $f\in \mathcal{C}^{m+1}([0,T]\times\partial\Omega)$, $\|f\|_{\mathcal{C}^{m+1}([0,T]\times\partial \Omega)}\leq \epsilon_0$. First, we extend $f$ to a function $\widetilde{f}\in \mathcal{C}^{m+1}([0,T]\times\Omega)$ such that $\widetilde{f}\vert_{[0,T]\times\partial\Omega}=f$ and
\[
\|\widetilde{f}\|_{\mathcal{C}^{m+1}([0,T]\times \Omega)}\leq C\|f\|_{\mathcal{C}^{m+1}([0,T]\times\partial \Omega)}.
\]
See the proof of \cite[Theorem 2]{de2018nonlinear} for more details.
 Let $\widetilde{p}=p-\widetilde{f}$, then $\widetilde{p}$ solves the equation
\[
\frac{1}{c^2}\partial^2_t\widetilde{p}-\Delta\widetilde{p}-2\beta(\widetilde{p}+\widetilde{f})\partial^2_t\widetilde{p}=2\beta(\widetilde{p}+\widetilde{f})\partial^2_t\widetilde{f}+2\beta(\partial_t\widetilde{p}+\partial_t\widetilde{f})^2,
\]
which can be written as
\begin{alignat}{2}
\partial^2_t\widetilde{p}-\mathscr{A}(t,x,\widetilde{p},\widetilde{f})\Delta\widetilde{p}&=\mathscr{F}(t,x,\widetilde{p},\partial_t\widetilde{p},\widetilde{f},\partial_t\widetilde{f},\partial_t^2\widetilde{f}),&\quad& \text{ in }(0,T)\times \Omega, \nonumber\\
\label{eq_hom}
\widetilde{p} &= 0,&\quad& \text{ on }(0,T)\times\partial \Omega, \\
\widetilde{p}=\frac{\partial \widetilde{p}}{\partial t}&=0.&\quad&\text{ on }\{t=0\},\nonumber
\end{alignat}
where
\begin{equation*}
\begin{split}
\mathscr{A}(t,x,\widetilde{p},\widetilde{f})&=\frac{1}{c^{-2}(x)-2\beta(x)(\widetilde{p}+\widetilde{f})},\\
\mathscr{F}(t,x,\widetilde{p},\partial_t\widetilde{p},\widetilde{f},\partial_t\widetilde{f},\partial_t^2\widetilde{f})&=\frac{2\beta(\widetilde{p}+\widetilde{f})\partial^2_t\widetilde{f}+2\beta(\partial_t\widetilde{p}+\partial_t\widetilde{f})^2}{c^{-2}(x)-2\beta(x)(\widetilde{p}+\widetilde{f})}.
\end{split}
\end{equation*}
Notice that if $\widetilde{p},\widetilde{f}$ is small enough, then $c^{-2}(x)-2\beta(x)(\widetilde{p}+\widetilde{f})>0$, and $\mathscr{A},\mathscr{F}$ are well defined.

For $R>0$, define $Z(R, T)$ as the set of all functions $w$ satisfying
\[
w\in \bigcap_{k=0}^m W^{k,\infty}([0,T];\, H^{m-k}(\Omega)),\qquad
\|w\|_Z^2 := \sup_{t\in[0,T]}\sum_{k=0}^m\|\partial^k_tw(t)\|^2_{H^{m-k}}\leq R^2.
\]
Assume that $\widetilde{w}\in Z(\rho_0,T)$ for some $\rho_0>0$ small enough. Then we have
\[
  \sup_{t\in[0,T]}\sum_{k=0}^{m-1}\|\partial^{k}_t\mathscr{F}(t,x,\widetilde{w},\partial_t\widetilde{w},\widetilde{f},\partial_t\widetilde{f},\partial_t^2\widetilde{f}))\|_{H^{m-k-1}} \leq C'(\epsilon_0+\epsilon_0\rho_0+\rho_0^2),
\]
for some constant $C'>0$.
One can check that $B(t):=\mathscr{A}(x,\widetilde{w},\widetilde{f})\Delta$ satisfies Assumptions (B1)-(B3) in \cite{dafermos1985energy} if $\epsilon_0,\rho_0$ are small enough.

Given $\widetilde{w}\in Z(\rho_0,T)$, consider first the \emph{linear} initial boundary value problem
\begin{alignat}{2}
\partial^2_t\widetilde{p}-\mathscr{A}(t,x,\widetilde{w},\widetilde{f})\Delta\widetilde{p}&=\mathscr{F}(t,x,\widetilde{w},\partial_t\widetilde{w},\widetilde{f},\partial_t\widetilde{f},\partial_t^2\widetilde{f}),&\quad& \text{ in }(0,T)\times \Omega, \nonumber\\
\label{linearized_IVP}
\widetilde{p} &= 0,&\quad& \text{ on }(0,T)\times\partial \Omega, \\
\widetilde{p}=\frac{\partial \widetilde{p}}{\partial t}&=0,&\quad&\text{ on }\{t=0\}\nonumber.
\end{alignat}
By \cite[Theorem 3.1]{dafermos1985energy}, there exists a unique solution $\widetilde{p}\in \bigcap_{k=0}^m \mathcal C^k([0,T];\, H^{m-k}(\Omega))$
to \eqref{linearized_IVP}, and it satisfies the estimate
\[
  \|\widetilde{p}\|_Z \leq C(\epsilon_0 + \epsilon_0\rho_0+\rho_0^2) e^{K T},
\]
where $C,K$ are positive constants depending on the coefficients of the equation. Denote $\mathscr{T}$ to be the map which maps $\widetilde{w}\in Z(\rho_0,T)$ to the solution $\widetilde{p}$ of \eqref{linearized_IVP}.

Notice that we can take $\rho_0$ small enough and $\epsilon_0=\frac{e^{-KT}}{2C}\rho_0$ such that 
\[
C(\epsilon_0+\epsilon_0\rho_0+\rho_0^2)e^{KT}<\rho_0.
\]
Then the map $\mathscr{T}$ maps from $Z(\rho_0,T)$ to itself.

For $j=1,2$, assume that $\widetilde{w}_j\in Z(\rho_0,T)$, and $\widetilde{p}_j$ solves the equation
\begin{alignat*}{2}
\partial^2_t\widetilde{p}_j-\mathscr{A}(t,x,\widetilde{w}_j,\widetilde{f})\Delta\widetilde{p}_j&=\mathscr{F}(t,x,\widetilde{w}_j,\partial_t\widetilde{w}_j,\widetilde{f},\partial_t\widetilde{f},\partial_t^2\widetilde{f}),&\quad& \text{ in }(0,T)\times \Omega, \\
\widetilde{p}_j &= 0,&\quad& \text{ on }(0,T)\times\partial \Omega, \\
\widetilde{p}_j=\frac{\partial \widetilde{p}_j}{\partial t}&=0,&\quad&\text{ on }\{t=0\}.
\end{alignat*}
As discussed above we have $\widetilde{p}_j\in Z(\rho_0,T)$. Then $\widetilde{p}_1-\widetilde{p}_2$ solves the equation
\begin{alignat*}{2}
\left(\partial^2_t-\mathscr{A}(t,x,\widetilde{w}_1,\widetilde{f})\Delta\right)(\widetilde{p}_1-\widetilde{p}_2)&=\mathscr{H},&\quad& \text{ in }(0,T)\times \Omega, \\
\widetilde{p}_1-\widetilde{p}_2&= 0,&\quad& \text{ on }(0,T)\times\partial \Omega, \\
\widetilde{p}_1-\widetilde{p}_2=\frac{\partial (\widetilde{p}_1-\widetilde{p}_2)}{\partial t}&=0,&\quad&\text{ on }\{t=0\},
\end{alignat*}
where 
\begin{equation*}
\begin{split}
\mathscr{H}=&(\mathscr{A}(t,x,\widetilde{w}_1,\widetilde{f})-\mathscr{A}(t,x,\widetilde{w}_2,\widetilde{f}))\Delta \widetilde{p}_2\\
&+\mathscr{F}(t,x,\widetilde{w}_1,\partial_t\widetilde{w}_1,\widetilde{f},\partial_t\widetilde{f},\partial_t^2\widetilde{f})-\mathscr{F}(t,x,\widetilde{w}_2,\partial_t\widetilde{w}_2,\widetilde{f},\partial_t\widetilde{f},\partial_t^2\widetilde{f}).
\end{split}
\end{equation*}
Now equip $Z(\rho_0,T)$ with the metric $d$ defined as
\[
d(w_1,w_2)=\sup_{t\in[0,T]}\left(\|w_1(t)-w_2(t)\|_{H^1}^2+\|\partial_tw_1(t)-\partial_tw_2(t)\|^2_{L^2}\right)^{1/2}.
\]
We remark here that $(Z(\rho_0,T),d(\,\cdot\,,\,\cdot\,))$ is a complete metric space (for more details, see the footnote in \cite[Section 4]{dafermos1985energy} and also the proof of \cite[Theorem 2.2.2]{ticequasilinear}). One can show that for any $t\in[0,T]$,
\begin{equation*}
\begin{split}
\|(\mathscr{A}(t,\cdot,\widetilde{w}_1,\widetilde{f})-\mathscr{A}(t,\cdot,\widetilde{w}_2,\widetilde{f}))\Delta \widetilde{p}_2(t)\|_{L^2}\leq &C\|\widetilde{w}_1(t)-\widetilde{w}_2(t)\|_{L^2}\|\Delta\widetilde{p}_2(t)\|_{L^\infty}\\
\leq &C\|\widetilde{w}_1(t)-\widetilde{w}_2(t)\|_{L^2}\|\widetilde{p}_2(t)\|_{H^m}\\
\leq &C\rho_0\|\widetilde{w}_1(t)-\widetilde{w}_2(t)\|_{L^2},
\end{split}
\end{equation*}
and also
\begin{equation*}
\begin{split}
&\|\mathscr{F}(t,\cdot,\widetilde{w}_1,\partial_t\widetilde{w}_1,\widetilde{f},\partial_t\widetilde{f},\partial_t^2\widetilde{f})-\mathscr{F}(t,\cdot,\widetilde{w}_2,\partial_t\widetilde{w}_2,\widetilde{f},\partial_t\widetilde{f},\partial_t^2\widetilde{f})\|_{L^2}\\
\leq&C(\rho_0+\epsilon_0)(\|\widetilde{w}_1(t)-\widetilde{w}_2(t)\|_{L^2}+\|\partial_t\widetilde{w}_1(t)-\partial_t\widetilde{w}_2(t)\|_{L^2}).
\end{split}
\end{equation*}
Similar as the proof of \cite[Theorem 4.1]{dafermos1985energy}, we can show that
\[
d(\mathscr{T}(\widetilde{w}_1),\mathscr{T}(\widetilde{w}_2))^2=d(\widetilde{p}_1,\widetilde{p}_2)^2\leq C(\rho_0+\epsilon_0)Te^{KT}d(\widetilde{w}_1,\widetilde{w}_2)^2.
\]
Thus one can choose $\rho_0,\epsilon_0$ small enough such that $\mathscr{T}$ is a contraction (with respect to the metric $d$). Then by contraction mapping theorem we can conclude that the equation \eqref{eq_hom} has a unique solution $\widetilde{p}$ in $Z(\rho_0,T)$. Using
 \cite[Theorem 3.1]{dafermos1985energy} again, we have
 \[
 \widetilde{p}\in\bigcap_{k=0}^m\mathcal{C}^k([0,T];\, H^{m-k}(\Omega)).
 \]
 This completes the proof of the theorem.
\end{proof}

\section{Second order linearization} \label{Section.SecondLinearization}
We will use a second order linearization of the Dirichlet-to-Neumann map to study our inverse problem. This higher order linearization technique has been extensively used in the literature, see \cite{hintz2020inverse} and the references therein. We take $f_1,f_2,f_0\in \mathcal{C}^{m+1}([0,T]\times\partial \Omega)$ to be boundary terms such that $f_1,f_2$ vanish near $\{t=0\}$ and $f_0$ vanishes near $\{t=T\}$.

Let $p$ be the solution to \eqref{eq_main} with $f=\epsilon_1f_1+\epsilon_2f_2$, and denote
\[
\mathcal{U}^{(12)}=\frac{\partial^2}{\partial\epsilon_1\partial\epsilon_2}p\Big\vert_{\epsilon_1=\epsilon_2=0}.
\]
Then $\mathcal{U}^{(12)}$ solves
\begin{equation*}
\begin{split}
(\frac{1}{c^2(x)}\partial_t^2-\Delta)\mathcal{U}^{(12)}-2\beta(x)\partial_t^2(u_1u_2)&=0,\quad\text{in }(0,T)\times\Omega,\\
\mathcal{U}^{(12)}&=0,\quad\text{on }(0,T)\times\partial\Omega,\\
\mathcal{U}^{(12)}=\partial_t\mathcal{U}^{(12)}&=0,\quad\text{on }\{t=0\},
\end{split}
\end{equation*}
where $u_j$, $j=1,2,$ is the solution to the \textit{linear} wave equation
\begin{equation}\label{eq_vj} 
\begin{split}
\frac{1}{c^2(x)}\partial_t^2u_j(t,x)-\Delta u_j(t,x)&=0,\quad\text{in }(0,T)\times\Omega,\\
u_j(t,x)&=f_j,\quad\text{on }(0,T)\times\partial\Omega,\\
u_j=\partial_tu_j&=0,\quad\text{on }\{t=0\}.
\end{split}
\end{equation}
Additionally, let $u_0$ solve the backward wave equation
\begin{equation}\label{eq_v0} 
\begin{split}
\frac{1}{c^2(x)}\partial_t^2u_0(t,x)-\Delta u_0(t,x)&=0,\quad\text{in }(0,T)\times\Omega,\\
u_0(t,x)&=f_0,\quad\text{on }(0,T)\times\partial\Omega,\\
u_0=\partial_tu_0&=0,\quad\text{on }\{t=T\}.
\end{split}
\end{equation}
Notice that $\frac{\partial^2}{\partial\epsilon_1\partial\epsilon_2}\Lambda(\epsilon_1f_1+\epsilon_2f_2)\Big\vert_{\epsilon_1=\epsilon_2=0}=\partial_\nu\mathcal{U}^{(12)}\vert_{(0,T)\times\partial\Omega}$.
Integration by parts yields
\begin{equation}\label{integral_id}
\begin{split}
&\int_0^T\int_{\partial \Omega}\frac{\partial^2}{\partial\epsilon_1\partial\epsilon_2}\Lambda(\epsilon_1f_1+\epsilon_2f_2)\Big\vert_{\epsilon_1=\epsilon_2=0}f_0\mathrm{d}S\mathrm{d}t\\
=&\int_0^T\int_{\partial \Omega}\partial_\nu \mathcal{U}^{(12)}f_0\mathrm{d}S\mathrm{d}t\\
=&\int_0^T\int_{\Omega}\Delta\mathcal{U}^{(12)}u_0\mathrm{d}x\mathrm{d}t+\int_0^T\int_{\Omega}\nabla\mathcal{U}^{(12)}\cdot\nabla u_0\mathrm{d}x\mathrm{d}t\\
=&\int_0^T\int_{\Omega}\Delta\mathcal{U}^{(12)}u_0\mathrm{d}x\mathrm{d}t-\int_0^T\int_{\Omega}\mathcal{U}^{(12)}\Delta u_0\mathrm{d}x\mathrm{d}t\\
=&\int_0^T\int_{\Omega}\frac{1}{c^2(x)}\partial_t^2\mathcal{U}^{(12)}u_0-2\beta(x)\partial_t^2(u_1u_2)u_0\mathrm{d}x\mathrm{d}t-\int_0^T\int_{\Omega}\mathcal{U}^{(12)}\Delta u_0\mathrm{d}x\mathrm{d}t\\
=&\int_0^T\int_{\Omega}\mathcal{U}^{(12)}\left(\frac{1}{c^2(x)}\partial_t^2u_0-\Delta u_0\right)\mathrm{d}x+2\int_0^T\int_{\Omega}\beta(x)\partial_t(u_1u_2)\partial_tu_0\mathrm{d}x\mathrm{d}t\\
=&2\int_0^T\int_{\Omega}\beta(x)\partial_t(u_1u_2)\partial_tu_0\mathrm{d}x\mathrm{d}t.
\end{split}
\end{equation}
Since, for given $f_1,f_2$, the DtN map $\Lambda$ determines $\Lambda(\epsilon_1f_1+\epsilon_2f_2)$ for all small $\epsilon_1,\epsilon_2>0$, it determines $\frac{\partial^2}{\partial\epsilon_1\partial\epsilon_2}\Lambda(\epsilon_1f_1+\epsilon_2f_2)\Big\vert_{\epsilon_1=\epsilon_2=0}$. Thus for the proof of the main theorem, we only need to construct special solutions $u_j,j=0,1,2$, let $f_j=u_j\vert_{(0,T)\times\partial\Omega}$, and recover the parameters $\beta$ from the integral identity \eqref{integral_id}.

\section{Proof of the main theorem} \label{Section.GaussBeams}
We will construct Gaussian beam solutions and insert them into the integral identity \eqref{integral_id} to show the uniqueness of $\beta$ from $\Lambda$. Gaussian beams have also been used to study various inverse problems for both elliptic and hyperbolic equations \cite{katchalov1998multidimensional,bao2014sensitivity, belishev1996boundary, dos2016calderon,feizmohammadi2019timedependent,feizmohammadi2019recovery,feizmohammadi2019inverse}. The construction of Gaussian beam solutions can be carried out under the Fermi coordinates introduced below.\\

\subsection{Fermi coordinates} We first extend $(\Omega,g)$ to a slightly larger manifold $(\widetilde{\Omega},g)$, which is also non-trapping. Consider the Lorentzian manifold $(\mathbb{R}\times\widetilde{\Omega},\overline{g})$, where $\overline{g}=-\mathrm{d}t^2+g$. We introduce Fermi coordinates in a neighborhood of a null geodesic $\vartheta$ in $\mathbb{R}\times\Omega$. Assume $\vartheta(t)=(t,\gamma(t))$, where $\gamma$ is a unit-speed geodesic in the Riemannian manifold $(\Omega,g)$. Assume $\vartheta$ passes through a point $(t_0,x_0)\in (0,T)\times\Omega$, i.e. $t_0\in (0,T)$ and $\gamma(t_0)=x_0\in\Omega$. By the non-trapping condition on $(\widetilde{\Omega},g)$, we assume that $\vartheta$ joins two points $(t_-,\gamma(t_-))$ and $(t_+,\gamma(t_+))$ where  $t_-,t_+\in (0,T)$ and $\gamma(t_-),\gamma(t_+)\in\partial\Omega$. Extend $\vartheta$ to $\widetilde{M}$ such that $\gamma(t)$ is well defined on $[t_--\epsilon,t_++\epsilon]\subset (0,T)$ with $\epsilon$ a small constant. We will follow the construction of the coordinates in \cite{feizmohammadi2019timedependent}. See also \cite{kurylev2014inverse}, \cite{uhlmann2018determination}.

Choose $\alpha_2,\alpha_3$ such that $\{\dot{\gamma}(t_0),\alpha_2,\alpha_3\}$ forms an orthonormal basis for $T_{x_0}\Omega$. Let $s$ denote the arc length along $\gamma$ from $x_0$. We note here that $s$ can be positive or negative, and $(t_0+s,\gamma(t_0+s))=\vartheta(t_0+s)$. For $k=2,3$, let $e_k(s)\in T_{\gamma(t_0+s)}\Omega$ be the parallel transport of $\alpha_k$ along $\gamma$ to the point $\gamma(t_0+s)$.

Define the coordinate system $(y^0=t,y^1=s,y^2,y^3)$ through $\mathcal{F}_1:\mathbb{R}^{1+3}\rightarrow \mathbb{R}\times\widetilde{\Omega}$:
\[
\mathcal{F}_1(y^0=t,y^1=s,y^2,y^3)=(t,\exp_{\gamma(t_0+s)}\left(y^2e_2(s)+y^3e_3(s)\right)).
\]
In the new coordinates, the null-geodesic $\vartheta$ is represented by $\{t=s,y^2=y^3=0\}$. On $\vartheta$, the Lorentzian metric $\overline{g}=-\mathrm{d}t^2+g$ satisfies
\[
\overline{g}\vert_{\gamma}=-\mathrm{d}t^2+\sum_{j=1}^3(\mathrm{d}y^j)^2,\quad\text{and}\quad\frac{\partial \overline{g}_{jk}}{\partial y^i}\Big\vert_\gamma=0,~1\leq i,j,k\leq 3.
\]

Introduce the map $(z^0,z'):=(z^0,z^1,z^2,z^3)=\mathcal{F}_2(y^0=t,y^1=s,y^2,y^3)$, where
\[
z^0=\tau=\frac{1}{\sqrt{2}}(t-t_0+s),\quad z^1=r=\frac{1}{\sqrt{2}}(-t+t_0+s),\quad z^j=y^j,\, j=2,3.
\]
The Fermi coordinates $(z^0=\tau,z^1=r,z^2,z^3)$ near $\vartheta$ is given by $\mathcal{F}:\mathbb{R}^{1+3}\rightarrow \mathbb{R}\times\widetilde{\Omega}$, where $\mathcal{F}=\mathcal{F}_1\circ\mathcal{F}_2^{-1}$. Denote $\tau_\pm=\sqrt{2}(t_\pm-t_0)$.
Then on $\vartheta$ we have
\[
\overline{g}\vert_\vartheta=2\mathrm{d}\tau\mathrm{d}r+\sum_{j=2}^3(\mathrm{d}z^j)^2\quad\text{and}\quad\frac{\partial \overline{g}_{jk}}{\partial z^i}\Big\vert_\vartheta=0,~0\leq i,j,k\leq 3.
\]

\subsection{Construction of Gaussian beam solutions} We will construct asymptotic solutions of the form $u_\rho=\mathfrak{a}_\rho e^{\mathrm{i}\rho\varphi}$ on $\widetilde M$ with 
\[
\varphi=\sum_{k=0}^N\varphi_k(\tau,z'),\quad \mathfrak{a}_\rho(\tau,z')=\chi\left(\frac{|z'|}{\delta}\right)\sum_{k=0}^N\rho^{-k}a_k(\tau,z'),\quad
a_k(\tau,z')=\sum_{j=0}^Na_{k,j}(\tau,z')
\]
in a neighborhood of $\vartheta$,
\begin{equation}\label{neighbor_V}
\mathcal{V}=\bigl\{(\tau,z')\in\widetilde{M} : \tau\in \bigl[\tau_--\tfrac{\epsilon}{\sqrt{2}},\tau_++\tfrac{\epsilon}{\sqrt{2}}\bigr],\,|z'|<\delta\bigr\}.
\end{equation}
Here for each $j$, $\varphi_j$ and $a_{k,j}$ are a complex valued homogeneous polynomials of degree $j$ with respect to the variables $z^i$, $i=1,2,3$, and $\delta>0$ is a small parameter. The smooth function $\chi:\mathbb{R}\rightarrow [0,+\infty)$ satisfies $\chi(t)=1$ for $|t|\leq\frac{1}{4}$ and $\chi(t)=0$ for $|t|\geq \frac{1}{2}$. 

Notice that
\[
(-\frac{\partial^2}{\partial t^2}+c^2\Delta) u=(-\frac{\partial^2}{\partial t^2}+\Delta_g)u-c^2\sum_{i=1}^3\frac{\partial\log c}{\partial x^i}\frac{\partial u}{\partial x^i}=\square_{\overline{g}}u-\langle\mathrm{d}(\log c),\mathrm{d} u\rangle_{\overline{g}}.
\]
We have
\begin{equation}
(\frac{\partial^2}{\partial t^2}-c^2\Delta)(\mathfrak{a}_\rho e^{\mathrm{i}\rho\varphi})=e^{\mathrm{i}\rho\varphi}(\rho^2(\mathcal{S}\varphi)\mathfrak{a}_\rho-\mathrm{i}\rho\mathcal{T}\mathfrak{a}_\rho+\square_{\overline{g}}\mathfrak{a}_\rho-\langle \mathrm{d}(\log c),\mathrm{d}\mathfrak{a}_\rho\rangle_{\overline{g}}), 
\end{equation}
where
\begin{equation*}
\begin{split}
  &\quad \mathcal{S}\varphi=\langle \mathrm{d}\varphi,\mathrm{d}\varphi\rangle_{\overline{g}}, \\
  &\quad \mathcal{T}a=2\langle \mathrm{d}\varphi,\mathrm{d}a\rangle_{\overline{g}}-\langle \mathrm{d}(\log c),\mathrm{d}\varphi\rangle_{\overline{g}}a-(\square_{\overline{g}}\varphi)a.
\end{split}
\end{equation*}
We need to construct $\varphi$ and $\mathfrak{a}_\rho$ such that
\begin{equation}\label{S_cond}
\begin{split}
&\frac{\partial^\Theta}{\partial z^\Theta}(\mathcal{S}\varphi)(\tau,0)=0,\\
&\frac{\partial^\Theta}{\partial z^\Theta}(\mathcal{T}a_0)(\tau,0)=0, \\
&\frac{\partial^\Theta}{\partial z^\Theta}(-\mathrm{i}\mathcal{T}a_k+\square_{\overline{g}} a_{k-1}-\langle \mathrm{d}(\log c),\mathrm{d}a_{k-1}\rangle_{\overline{g}})(\tau,0)=0,
\end{split}
\end{equation}
for $\Theta=(\Theta_0=0,\Theta_1,\Theta_2,\Theta_3)$ with $|\Theta|\leq N$. For more details we refer to \cite{feizmohammadi2019recovery}. Following \cite{feizmohammadi2019timedependent}, we take
\[
\varphi_0=0,\quad \varphi_1=z^1,\quad
\varphi_2(\tau,z')=\sum_{1\leq i,j\leq 3}H_{ij}(\tau)z^iz^j.
\]
Here $H$ is a symmetric matrix with $\Im H(\tau)>0$; the matrix $H$ satisfies a Riccati ODE,
\begin{equation}\label{Ricatti}
\frac{\mathrm{d}}{\mathrm{d}\tau}H+HCH+D=0,\quad\tau\in \bigl(\tau_--\tfrac{\epsilon}{2},\tau_++\tfrac{\epsilon}{2}\bigr),\quad H(0)=H_0,\text{ with }\Im H_0>0,
\end{equation}
where $C$, $D$ are matrices with $C_{11}=0$, $C_{ii}=2$, $i=2,3$, $C_{ij}=0$, $i\neq j$ and $D_{ij}=\frac{1}{4}(\partial_{ij}^2\overline{g}^{11})$.
 
 \begin{lemma}[\text{\cite[Lemma 3.2]{feizmohammadi2019timedependent}}]
 The Ricatti equation \eqref{Ricatti} has a unique solution. Moreover the solution $H$ is symmetric and $\Im (H(\tau))>0$ for all $\tau\in (\tau_--\frac{\delta}{2},\tau_++\frac{\delta}{2})$. For solving the above Ricatti equation, one has $H(\tau)=Z(\tau)Y(\tau)^{-1}$, where $Y(\tau)$ and $Z(\tau)$ solve the ODEs
\begin{equation*}
 \begin{split}
& \frac{\mathrm{d}}{\mathrm{d}\tau}Y(\tau)=CZ(\tau),\quad Y(\tau_-)=Y_0,\\
&\frac{\mathrm{d}}{\mathrm{d}\tau}Z(\tau)=-D(\tau)Y(\tau),\quad Z(\tau_-)=Y_1=H_0Y_0.
 \end{split}
\end{equation*}
 In addition, $Y(\tau)$ is nondegenerate.
 \end{lemma}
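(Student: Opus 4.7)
The plan is to linearize the matrix Riccati equation through the Hamiltonian system for $(Y,Z)$ and to read off existence, symmetry, and positivity of $\Im H$ from two conserved bilinear/sesquilinear forms. First I would apply standard linear ODE theory to the coupled system $\dot Y=CZ$, $\dot Z=-DY$: the coefficients are smooth on the compact $\tau$-interval, so a unique global solution $(Y,Z)$ exists. Wherever $Y$ is nondegenerate, a direct differentiation of $H:=ZY^{-1}$ gives
\[
\dot H=\dot Z Y^{-1}-Z Y^{-1}\dot Y Y^{-1}=-D-HCH,
\]
so $H$ solves \eqref{Ricatti}; uniqueness of the Riccati solution is then automatic, since $-HCH-D$ is polynomial, and hence locally Lipschitz, in $H$.

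The core of the argument is to show that $Y(\tau)$ remains invertible, that $H$ is symmetric, and that $\Im H(\tau)>0$. For this I would exploit two conserved quantities along the linear flow. Using $C=C^T$, $D=D^T$, and the fact that $D$ is real (since $\bar g^{11}$ is real), one checks by direct computation that
\[
\frac{d}{d\tau}\bigl(Y^TZ-Z^TY\bigr)=0,\qquad \frac{d}{d\tau}\bigl(Z^*Y-Y^*Z\bigr)=0.
\]
At $\tau=\tau_-$ the first quantity equals $Y_0^T(H_0-H_0^T)Y_0=0$ since $H_0$ is symmetric, while the second equals $Y_0^*(H_0^*-H_0)Y_0=-2i\,Y_0^*\Im(H_0)\,Y_0$, a definite skew-Hermitian matrix because $\Im H_0>0$ and $Y_0$ is invertible.

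Next I would deduce nondegeneracy of $Y$ by contradiction: if $Y(\tau_0)v=0$ for some $v\neq 0$, then
\[
v^*\bigl(Z^*Y-Y^*Z\bigr)(\tau_0)\,v=v^*Z^*(Yv)-(Yv)^*Zv=0,
\]
contradicting the definite skew-Hermitian value just computed. With $Y$ invertible throughout, the identities
\[
H-H^T=Y^{-T}\bigl(Y^TZ-Z^TY\bigr)Y^{-1}=0,\qquad \frac{H-H^*}{2i}=(Y^*)^{-1}Y_0^*\,\Im(H_0)\,Y_0\,Y^{-1}
\]
immediately give symmetry of $H$ (so that $H^*=\bar H$ and $(H-H^*)/(2i)=\Im H$) and positivity of $\Im H$ on the whole interval. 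The main obstacle is conceptual rather than computational: spotting the right Lagrangian invariant $Y^TZ-Z^TY$ and the right Hermitian invariant $Z^*Y-Y^*Z$ associated with the underlying symplectic structure. Once these are identified, the propagation of symmetry, the nondegeneracy of $Y$, and the sign of $\Im H$ all follow from short linear-algebra computations anchored at $\tau_-$.
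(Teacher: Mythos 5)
Your argument is correct and is essentially the standard proof of this lemma: the paper itself offers no proof (it simply imports the statement from \cite[Lemma 3.2]{feizmohammadi2019timedependent}), and the cited source argues exactly as you do, by linearizing to the Hamiltonian system for $(Y,Z)$ and propagating the symplectic invariant $Y^TZ-Z^TY$ and the Hermitian invariant $Z^*Y-Y^*Z$ to get nondegeneracy of $Y$, symmetry of $H=ZY^{-1}$, and $\Im H>0$. The only implicit hypotheses you rely on, namely that $Y_0$ is invertible and $H_0$ is symmetric, are part of the standard Gaussian-beam setup (e.g.\ $Y_0=\mathrm{Id}$, $H_0=\mathrm{i}\,\mathrm{Id}$), so no gap results.
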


 \begin{lemma}[\text{\cite[Lemma 3.3]{feizmohammadi2019timedependent}}]\label{lemma_H0}
 The following identity holds:
\[
 \det(\Im(H(\tau))|\det(Y(\tau))|^2=C_0
\]
 with $C_0$ independent of $\tau$.
 \end{lemma}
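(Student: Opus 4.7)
My plan is to prove the identity by exhibiting a Wronskian-type matrix that is conserved along the flow and that, up to the factor $|\det Y|^2$, equals $\Im H$. Specifically, I would define
\[
W(\tau) := \frac{1}{2\mathrm{i}}\bigl(Y(\tau)^* Z(\tau) - Z(\tau)^* Y(\tau)\bigr),
\]
and establish two claims in sequence: (i) the algebraic relation $Y^*(\Im H)\, Y = W$, and (ii) the conservation law $W'(\tau) = 0$. Taking determinants of (i) and using (ii) then yields $|\det Y(\tau)|^2\, \det(\Im H(\tau)) = \det W$, which is the desired identity with constant $C_0 = \det W$.

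For (i), I would use the representation $H = ZY^{-1}$ from the previous lemma to write
\[
\Im H = \frac{1}{2\mathrm{i}}(H - H^*) = \frac{1}{2\mathrm{i}}\bigl(ZY^{-1} - (Y^*)^{-1} Z^*\bigr),
\]
and multiply on the left by $Y^*$ and on the right by $Y$; the result is exactly $W$. For (ii), I would differentiate $W$ and substitute the linear ODEs $Y' = CZ$ and $Z' = -DY$ (so that $(Y^*)' = Z^* C^*$ and $(Z^*)' = -Y^* D^*$). After the cross terms cancel, the derivative collapses to
\[
W'(\tau) = \frac{1}{2\mathrm{i}}\bigl(Z^*(C^* - C)Z + Y^*(D^* - D)Y\bigr),
\]
which vanishes because $C$ is a real diagonal matrix and $D_{ij} = \frac{1}{4}\partial^2_{ij}\overline{g}^{11}$ is real symmetric, so $C^* = C$ and $D^* = D$.

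I do not expect a serious obstacle. The invertibility of $Y(\tau)$ required for $H = ZY^{-1}$ (and hence for taking $\det$ in the final step) is already supplied by the preceding lemma, and the required reality/symmetry of $C$ and $D$ is built into the Fermi coordinate construction where $\overline{g}^{11}$ is real. The constant can be pinned down at $\tau = \tau_-$ via $Z(\tau_-) = H_0 Y_0$, giving $C_0 = \det(\Im H_0)\,|\det Y_0|^2 > 0$ since $\Im H_0 > 0$.
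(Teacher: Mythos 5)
Your proof is correct, and it is essentially the standard argument: the paper itself does not prove this lemma but imports it verbatim from \cite[Lemma 3.3]{feizmohammadi2019timedependent}, where the proof is exactly this Wronskian conservation law $\frac{\mathrm{d}}{\mathrm{d}\tau}\bigl(Y^*Z - Z^*Y\bigr)=0$ combined with $Y^*(\Im H)Y=\frac{1}{2\mathrm{i}}(Y^*Z-Z^*Y)$ and taking determinants. The only ingredient worth stating explicitly is that the identification of the entrywise imaginary part $\Im H$ with $\frac{1}{2\mathrm{i}}(H-H^*)$ uses the symmetry of $H$ (and of $H_0$ when evaluating $C_0$ at $\tau_-$), which the preceding lemma supplies along with the invertibility of $Y$; with that noted, your argument is complete.
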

 
  We see that the matrix $Y(\tau)$ satisfies
  \begin{equation}\label{eq_Y}
  \frac{\mathrm{d}^2}{\mathrm{d}\tau^2}Y+CD Y=0,\quad Y(0)=Y_0,\quad \frac{\mathrm{d}}{\mathrm{d}\tau}Y(0)=CY_1.
  \end{equation}
To construct the amplitude, first notice that $\mathcal{T}_0a_0=0$ simplifies to
\[
(2\partial_\tau-\partial_\tau(\log c))a_0-(\square_{\overline{g}}\varphi)a_0=0.
\]
Using the fact
\[
\square_{\overline{g}}\varphi=\frac{\mathrm{d}}{\mathrm{d}\tau}\log(\det(Y(\tau))),
\]
we can set
\[
a_0\vert_{\vartheta(\tau)}=\det(Y(\tau))^{-1/2}c(\tau,0)^{-1/2}.
\]

Fix $k\geq 4$. As in \cite{feizmohammadi2019recovery}, we have the following estimate by the construction of $u_\rho$ (cf.\ \eqref{S_cond})
\begin{equation}\label{est_remainder}
\|(c^{-2}\partial_t^2-\Delta) u_\rho\|_{H^k((0,T)\times\Omega)}\leq C\rho^{-K},\qquad K=\frac{N+1-k}{2}-1.
\end{equation}
We construct Gaussian beam solutions of the forms
\begin{equation}\label{uj0}
u^{(1)}_\rho=u^{(2)}_\rho=\mathfrak{a}_\rho e^{\mathrm{i}\rho\varphi},\quad u^{(0)}_{2\rho}=\overline{\mathfrak{a}_{2\rho}}e^{-2\mathrm{i}\rho\overline{\varphi}},
\end{equation}
concentrating near the same null-geodesic $\vartheta(t)=(t,\gamma(t))$ as in \cite{feizmohammadi2019timedependent}. The parameter $\delta$ can be taken small enough such that $u^{(j)}_\rho=0$, $j=1,2$ near $\{t=0\}$ and $u^{(0)}_{2\rho}=0$ near $\{t=T\}$.

For $j=1,2$, we let
\[
u_j=u^{(j)}_\rho+R^{(j)}_\rho
\]
where $R^{(j)}_\rho$ be the solution to the initial boundary value problem
\begin{alignat*}{2}
(c^{-2}\partial_t^2-\Delta) R_\rho^{(j)}&=-(c^{-2}\partial_t^2-\Delta) u_\rho^{(1)}&\quad&\text{ in }(0,T)\times \Omega,\\
R_\rho^{(j)}&=0&\quad&\text{ on }  (0,T)\times\partial \Omega,\\
R_\rho^{(j)}=\partial_tR_\rho^{(j)}&=0&\quad&\text{ on }\{t=0\}.
\end{alignat*}
Invoking \eqref{est_remainder}, the remainder $R_\rho^{(j)}$ satisfies the estimate
\[
\|R_\rho^{(j)}\|_{H^{k+1}((0,T)\times\Omega)}\leq C\rho^{-K}.
\]
Using Sobolev embedding, we can choose $N$ large enough such that
\[
\|R_\rho^{(j)}\|_{\mathcal{C}((0,T)\times\Omega)}+\|\partial_tR_\rho^{(j)}\|_{\mathcal{C}((0,T)\times\Omega)}\leq C\rho^{-3/2}.
\]
We note that $u_j=u_\rho^{(j)}+R_\rho^{(j)}$ is the solution to \eqref{eq_vj} with boundary value $f_j=u_\rho^{(j)}\vert_{(0,T)\times\partial \Omega}$.

Similarly, we can construct a solution to \eqref{eq_v0} of the form $u_0=u^{(0)}_{2\rho}+R^{(0)}_{2\rho}$, where $u^{(0)}_{2\rho}$ is constructed in \eqref{uj0}. For this purpose, we only need to take the remainder term $R_{2\rho}^{(0)}$ to be the solution to the backward wave equation
\begin{alignat*}{2}
(c^{-2}\partial_t^2-\Delta) R_{2\rho}^{(0)}&=-(c^{-2}\partial_t^2-\Delta) u_{2\rho}^{(0)}&\quad&\text{ in }(0,T)\times \Omega,\\
R_{2\rho}^{(0)}&=0&\quad&\text{ on }  (0,T)\times\partial \Omega,\\
R_{2\rho}^{(0)}=\partial_tR_{2\rho}^{(0)}&=0&\quad&\text{ on }\{t=T\}.
\end{alignat*}
Now $u_0$ is the solution to \eqref{eq_v0} with $f_0= u_{2\rho}^{(0)}\vert_{(0,T)\times\partial \Omega}$.

\subsection{Proof of Theorem \ref{mainthm}}
Now extend $\beta$ from $\Omega$ to $\widetilde{\Omega}$ such that $\beta=0$ in $\widetilde{\Omega}\setminus\Omega$.
Upon using the Gaussian beam solutions constructed in previous section in the integral identity \eqref{integral_id}, we see that the DtN map $\Lambda$ determines
\begin{equation*}
\begin{split}
&\rho^{-1/2}\int_0^T\int_{\Omega}\beta(x)\partial_t(u_1u_2)\partial_tu_0\mathrm{d}x\mathrm{d}t\\
=&\rho^{-1/2}\int_0^T\int_{\Omega}\beta(x)e^{-4\rho\Im{\varphi}}\chi^3(2\mathrm{i}\rho)\partial_t\varphi a_0a_0(-2\mathrm{i}\rho\overline{\partial_t\varphi})\overline{a_0}\mathrm{d}x\mathrm{d}t+\mathcal{O}(\varrho^{-1})\\
=&4\rho^{3/2}\int_0^T\int_{\Omega}\beta(x)e^{-4\rho\Im\varphi}\chi^3|\partial_t\varphi|^2a_0a_0\overline{a_0}\mathrm{d}x\mathrm{d}t+\mathcal{O}(\varrho^{-1})\\
=&4\rho^{3/2}\int_{\tau_--\frac{\epsilon}{\sqrt{2}}}^{\tau_++\frac{\epsilon}{\sqrt{2}}}\int_{|z'|<\delta}\beta e^{-4\varrho\Im\varphi}\chi^3(\frac{|z'|}{\delta})|\partial_t\varphi|^2a_0a_0\overline{a_0}c^{3}\mathrm{d}z'\wedge\mathrm{d}\tau+\mathcal{O}(\varrho^{-1}).
\end{split}
\end{equation*}
Here we used the fact that the Euclidean volume form is $\mathrm{d}x\mathrm{d}t=c^3\mathrm{d}\tau\wedge\mathrm{d}x'$ under the Fermi coordinates. 
Notice that
\[
|\partial_t\varphi|^2a_0a_0\overline{a_0}\big\vert_{\vartheta}=C_0|\det Y(\tau)|^{-1}(\det Y(\tau))^{-1/2}c^{-3/2},
\]
and thus we have, using the method of stationary phase,
\begin{equation*}
\begin{split}
&\rho^{3/2}\int_{|z'|<\delta}\beta e^{-4\varrho\Im\varphi}\chi^3(\frac{|z'|}{\delta})|\partial_t\varphi|^2a_0a_0\overline{a_0}c^{3}\mathrm{d}z'\\
=&C_0(\det Y(\tau))^{-1/2}\beta(\tau,0)c^{3/2}+\mathcal{O}(\rho^{-1}).
\end{split}
\end{equation*}

Letting $\rho\rightarrow+\infty$, we can extract the line integral
\[
\int_{\vartheta(\tau)}\beta(\vartheta(\tau))c^{3/2}(\vartheta(\tau))(\det Y(\tau))^{-1/2}\mathrm{d}\tau
\]
from the DtN map $\Lambda$ by \eqref{integral_id}. To show the unique determination of $\beta$ from $\Lambda$, we only need to show that the above line integral uniquely determines $\beta$.

Since the Lorentzian metric $\overline{g}=-\mathrm{d}t^2+g$ is of the product form, we have
\[
\overline{g}_{00}=\overline{g}\left(\frac{\partial}{\partial z^0},\frac{\partial}{\partial z^0}\right)=\overline{g}\left(\frac{1}{\sqrt{2}}\frac{\partial}{\partial y^1}+\frac{1}{\sqrt{2}}\frac{\partial}{\partial y^0},\frac{1}{\sqrt{2}}\frac{\partial}{\partial y^1}+\frac{1}{\sqrt{2}}\frac{\partial}{\partial y^0}\right)=\frac{1}{2}g_{11}-\frac{1}{2}.
\]
Here $g_{11}=g(\frac{\partial}{\partial y^1},\frac{\partial}{\partial y^1})$. Using \cite[Lemma 3.4 and Corollary 3.5]{feizmohammadi2019timedependent}, we have
\[
\frac{\partial^2\overline{g}^{11}}{\partial z^1\partial z^i}\Bigg\vert_{\vartheta(t)}=0
\]
for any $i=1,2,3$, and
\[
\frac{\partial^2\overline{g}^{11}}{\partial z^i\partial z^j}\Big\vert_{\vartheta(t)}=-\frac{\partial^2\overline{g}_{00}}{\partial z^i\partial z^j}\Big\vert_{\vartheta(t)}=-\frac{1}{2}\frac{\partial^2g_{11}}{\partial y^i\partial y^j}\Big\vert_{\gamma(t)}=-R_{1i1j}\vert_{\gamma(t)},
\]
for $i,j=1,2,3$, where $R$ is the Riemann curvature tensor on the Riemannian manifold $(\Omega,g)$.

We can take $Y_{11}\equiv 1$, $Y_{1j}=Y_{i1}=0$ for all $i,j=1,2,3$.
Notice $\tau=\sqrt{2}(t-t_0)$ and $z'=0$ on $\vartheta$, thus $Y$ solves the equation
\[
\frac{\mathrm{d}^2}{\mathrm{d}t^2}Y=-4DY,
\]
which is equivalent to
\[
\frac{\mathrm{d}^2}{\mathrm{d}t^2}Y_{ij}-g^{k\ell}R_{1i1k}Y_{\ell j}=0.
\]
%

Denote $\widetilde{Y}=(Y_{\alpha}^{\phantom{\alpha}\beta})_{\alpha,\beta=2,3}$. Then
\[
\det(Y)=\det(\widetilde{Y}).
\]
Note that $\widetilde{Y}$ can be viewed as a transversal $(1,1)$-tensor along $\gamma\subset M$. In the local coordinates $(y^1,y^2,y^3)$ on $(\Omega,g)$, $\widetilde{Y}$ can be written as
\[
\widetilde{Y}_{1}^{\phantom{i}1}=0,\quad\widetilde{Y}_{1}^{\phantom{i}j}=0,\quad\widetilde{Y}_{i}^{\phantom{i}1}=0,\quad\widetilde{Y}_{i}^{\phantom{i}j}=Y_{i}^{\phantom{i}j},\quad i,j=2,3.
\]
 See \cite{dahl2009geometrization,feizmohammadi2019inverse} for more details.
Thus $\widetilde{Y}$ solves the equation, written in its invariant form,
\begin{equation}\label{eq_Ytilde}
\frac{D^2}{\mathrm{d}t^2}\widetilde{Y}_{i}^{\phantom{i}j}+R_{ik\ell}^{\phantom{1\alpha1}m}\dot{\gamma}^k\dot{\gamma}^\ell\widetilde{Y}_{m}^{\phantom{\alpha}j}=0.
\end{equation}
To summarize, we now have the \textit{Jacobi weighted ray transform of the first kind} (cf. \cite{feizmohammadi2019inverse}) of $f=\beta c^{3/2}$ along the geodesic $\gamma$ in $(\Omega,g)$ passing through $x_0=\gamma(t_0)$,
\begin{equation}\label{jacobi_transform}
\mathscr{ J}_{\widetilde{Y}}^{(1)}f=\int_{t_-}^{t_+}f(\gamma(t))(\det \widetilde{Y}(t))^{-1/2}\mathrm{d}t,
\end{equation}
for any transversal $(1,1)$-tensor $\widetilde{Y}$ solving \eqref{eq_Ytilde}. If $(\Omega,g)$ has no conjugate points, by \cite[Proposition 3]{feizmohammadi2019inverse}, $f\vert_{\gamma}$ can be recovered by knowing
$\mathscr{J}_{\widetilde{Y}}^{(1)}f$ along the single geodesic $\gamma$ for all admissible $\widetilde{Y}$. In particular, we can determine $\beta(x_0)$. Since $x_0$ is an arbitrary point, $\beta$ can be uniquely determined in $\Omega$.

If $(\Omega,g)$ satisfies the foliation condition described in \cite{paternain2019geodesic}, one can use the invertibility of weighted geodesic ray transform with a single weight established in \cite{paternain2019geodesic}. 
\subsection{The case when $c\equiv 1$}
When the coefficient $c\equiv c_0$ is a constant, without loss of generality, we can assume $c_0=1$. In this case the Riemann curvature tensor $R$ is zero, thus we can take solution to \eqref{eq_Ytilde} to be $\widetilde{Y}_{\alpha}^{\phantom{\alpha}\beta}(t)\equiv\delta_{\alpha}^{\phantom{\alpha}\beta}$, $\alpha,\beta=2,3$, and consequently $\det(\widetilde{Y}(t))\equiv 1$. Now the Jacobi weighted ray transform \eqref{jacobi_transform} reduces to the X-ray transform
\[
Xf(\gamma)=\int_{t_-}^{t_+}f(\gamma(t))\mathrm{d}t,
\]
the invertibility of which is well known.

\section{Reconstruction Algorithm} \label{Section.Numerical}
In this section we propose an algorithm to reconstruct the coefficient of nonlinearity $\beta$ from measured boundary data, and report the results of a numerical implementation of this algorithm.  From now on we assume that $c$ is constant.
Our approach is inspired by our proof of Theorem \ref{mainthm} based on the second order linearization of the problem. However, instead of taking the frequency to infinity later in the proof, we will consider the problem at fixed \textit{high} frequency and take into account the effect of diffraction in this section. 

We assume that $\beta$ is compactly supported in $\Omega$, and fix the value of the frequency $\omega$. %
We follow the notation from Section \ref{Section.SecondLinearization} and take $u_{1} = u_{2} = e^{\mathrm{i} k \cdot x} e^{-\mathrm{i} \omega t}$ representing waves propagating in the $\hat{k}$-direction, with wave number $k$, wave speed $c$, and angular frequency $\omega=c|k|$. The linearized wave field $\mathcal{U}:=\mathcal{U}^{(12)}$ satisfies the equation
\[
(-c^{-2}\partial^2_t+\Delta)\mathcal{U}=8\omega^2\beta e^{2\mathrm{i} k\cdot x} e^{- 2\mathrm{i} \omega t}.
\]
Assuming now that we have measurements for all $t$, and taking Fourier transform in $t$ of the above equation, one gets
\begin{equation}\label{eq_mcU}
\Delta \hat{\mathcal{U}}(x,\xi) + (\xi /c)^2  \hat{\mathcal{U}}(x,\xi) =  8\omega^2\beta e^{2 \mathrm{i} k \cdot x} \delta(\xi - 2 \omega),
\end{equation}
where $ \hat{\mathcal{U}}(x,\xi)=\mathcal{F}_{t\rightarrow\xi}\mathcal{U}$ (in the distributional sense). Notice that, as explained in the introduction, the isonating profiles oscillate at the fundamental harmonic $\omega$ whereas the wave field generated by the nonlinearity oscillates at the second harmonic $2 \omega$ realized in \eqref{helmholtz_eq} by the presence of the delta function $\delta(\xi - 2 \omega)$. We can let $\hat{\mathcal{U}}(x,\xi)=0$ for $\xi\neq 2\omega$. Let $\mathcal{V}(x)=\frac{1}{8\omega^2}\int\hat{\mathcal{U}}(x,\xi)\mathrm{d}\xi$, then one can integrate the equation \eqref{eq_mcU}, and find that  $\mathcal{V}$ satisfies the Helmholtz equation (again in distributional sense)
\begin{equation}\label{helmholtz_eq}
\Delta \mathcal{V} + (2\omega /c)^2  \mathcal{V}=\beta e^{2 \mathrm{i} k \cdot x}.
\end{equation}
Multiply above equation by $\phi(x)=e^{-2\mathrm{i}\theta\cdot x}$ with $|\theta|=|k|$, and integrate by parts, we obtain
\[
\int_{\Omega}\beta e^{2\mathrm{i}(k-\theta)\cdot x}\mathrm{d}x=\int_{\partial \Omega}(\frac{\partial\mathcal{V}}{\partial\nu}\phi-\mathcal{V}\frac{\partial \phi}{\partial\nu})\mathrm{d}S.
\]
Since $\{k-\theta\vert k,\theta\in S^2,|k|=|\theta|\}$ fills in the ball with radius $2|k|$, we can get $\hat{\beta}$ in a ball of radius $4|k|$ from $\mathcal{V}\vert_{\partial\Omega},\frac{\partial\mathcal{V}}{\partial\nu}\vert_{\partial\Omega}$. This shows the uniqueness of $\beta$ from fixed-frequency data, using the fact that $\beta$ is compactly supported.

\subsection{One-way wave approximation in the high frequency regime}
We take $\Omega=(-\frac{L}{2},\frac{L}{2})\times(-\frac{L}{2},\frac{L}{2})$.
In the ultrasound transmission setting, the domain $\Omega$ is isonated on one side (for example, $x=-\frac{L}{2}$) and the transmitted wave generated by the nonlinearity is measured on the opposite side ($x=\frac{L}{2}$). 
To isolate the forward moving part of the wave field, we will use the so-called paraxial, parabolic, or one-way wave approximations intended to describe waves with a preferred direction of propagation \cite{Bamberger1988a,Bamberger1988b,Angus2014,
Antoine1999,Acosta2017c}.

We follow the approach described in \cite{Antoine1999} based on the work of Nirenberg \cite{Nirenberg1973} showing that, up to a smoothing operator, the Helmholtz operator can be factored into its forward and backward components that recast \eqref{helmholtz_eq} as
\begin{equation}\label{helmholtz_eq_factored}
\left( \partial_{x} - \Lambda^{+}_{2\omega} \right) \left( \partial_{x} - \Lambda^{-}_{2\omega} \right)  \mathcal{V} = \beta e^{2 \mathrm{i} k x} 
\end{equation}
where $\Lambda^{\pm}_{2\omega}$ are pseudo-differential operators of order $+1$ known as the forward and backward Dirichlet--to--Neumann maps. These DtN operators $\Lambda^{\pm}_{2\omega}$ are not to be confused with the DtN map $\Lambda$ defined in Section \ref{Section.Intro}. The latter maps Dirichlet data to Neumann data for solutions to the full wave equation \eqref{eq_main} in $\Omega$. The former map Dirichlet data to Neumann data for the respective forward and backward components of the wave field across any vertical line within the domain $\Omega$.

By setting $V = ( \partial_{x} - \Lambda^{-}_{2\omega} )  \mathcal{V}$, we obtain the forward moving wave field satisfying
\begin{equation}\label{forward_eq}
\partial_{x} V - \Lambda^{+}_{2 \omega} V = \beta e^{2 \mathrm{i} k x}.
\end{equation}
Local approximations of the forward DtN map are the subject of much research in the area of absorbing boundary conditions. See \cite{Antoine1999} and references therein. In particular, in the high frequency regime, the forward DtN map has the following approximation,
\begin{equation}\label{DtN_approx}
\Lambda^{+}_{2\omega} V = \frac{2\mathrm{i}\omega}{c} V - \frac{c}{4 \mathrm{i}\omega} \partial_{y}^2 V + \mathcal{O}(1/\omega^2).
\end{equation}
Neglecting the higher order terms and plugging this approximation into \eqref{forward_eq} we obtain our one-way wave model of the ultrasound transmission field $V$ satisfying
\begin{equation}\label{forward_eq_2}
\partial_{x} V - 2\mathrm{i}k V + \frac{1}{4 \mathrm{i} k} \partial_{y}^2 V = \beta e^{2 \mathrm{i} k x}.
\end{equation}
Using the integrating factor $e^{- 2 \mathrm{i} k x}$ and defining $v = V e^{- 2 \mathrm{i} k x}$ we obtain the following governing equation
\begin{equation}\label{forward_eq_3}
\partial_{x} v + \frac{1}{4 \mathrm{i} k} \partial_{y}^2 v =  \beta,
\end{equation}
which is a parabolic equation for propagation in the $x$-direction with complex diffusion in the $y$-direction. This equation is supplemented by vanishing Dirichlet conditions at $x=-L/2$ and $y=\pm L/2$. Also notice that the highly oscillatory factor $e^{- 2 \mathrm{i} k x}$ does not appear in \eqref{forward_eq_3} which is extremely important for computational purposes in order to accurately approximate $v$ over a long distance $L$ relative to the wavelength $\lambda = 2 \pi / k$.

\subsection{Inversion algorithm}
In order to obtain tomographic-like measurements, we replace $\beta$ in \eqref{forward_eq_3} with a rotated version $\beta_{\theta} = \beta \circ R_{\theta}$ where $R_{\theta}$ is the orthogonal matrix for performing a rotation by an angle $\theta$. The solution $v_{\theta}$ to \eqref{forward_eq_3} with $\beta_{\theta}$ as the source renders a mapping $\beta \mapsto v_{\theta}|_{x=L/2}$. This mapping is analogue to the Radon transform of $\beta$ but for probing the domain of interest with waves at finite frequencies,
\begin{equation}\label{Forward1}
\mathcal{W}[\beta](\theta,y) = v_{\theta}(L/2, y).
\end{equation}
We will propose an algorithm for finding $\beta$ given the measured data encoded in $\mathcal{W}[\beta]$.

Our proposed algorithm to invert \eqref{Forward1} is analogue to the filtered back-projection employed to invert the Radon transform \cite{Natterer2001book,Feeman2015book}. We propose 
\begin{equation}\label{Propose_algo_1}
\beta \approx \mathcal{W}^{*} \left[ h \ast_{y} \mathcal{W} [\beta] \right]
\end{equation}
where $h$ is a suitable filter acting on the $y$-variable, $\ast_{y}$ represents convolution in the $y$-variable, and $\mathcal{W}^{*}$ is the adjoint of $\mathcal{W}$ with respect to the $L^2$ inner product. For \eqref{Propose_algo_1} to be applicable, it only remains to find an implementable expression for $\mathcal{W}^{*}$. This can be accomplished by multiplying \eqref{forward_eq_3} by $\overline{\varphi}_{\theta}$ and integrating by parts to yield
\begin{equation}\label{Adjoint_W}
\int \int \mathcal{W} [\beta]  \overline{\eta} \, dy \, d \theta = \int_{\Omega} \beta  \int  \left( \overline{\varphi}_{\theta} \circ R_{\theta}^{T} \right) \, d \theta \, d(x,y)
\end{equation}
where $\varphi_{\theta}$ satisfies
\begin{equation}\label{Adjoint_phi}
\partial_{x} \varphi_{\theta} + \frac{1}{4\mathrm{i}k} \partial_{y}^2 \varphi_{\theta} = 0
\end{equation}
backwards in the $x$-direction, augmented by vanishing Dirichlet condition at $y=\pm L/2$ and prescribed Dirichlet condition $\varphi_{\theta} = \eta = \eta(\theta,y)$ at $x = L/2$. The identity \eqref{Adjoint_W} shows that
\begin{equation}\label{Adjoint_phi2}
\mathcal{W}^{*}[\eta] = \int \left( \varphi_{\theta} \circ R^{T}_{\theta} \right) \, d \theta.
\end{equation}

\subsection{Numerical examples}
We implemented the formula \eqref{Propose_algo_1} on a discrete setting to reconstruct the Shepp-Logan and an image of the brain vasculature. In each case, we rotated the image $\beta$ with an angular resolution of one degree for $\theta \in (0^{\circ} , 360^{\circ})$. The true images are displayed in Figure \ref{fig.TrueImages} (top row). For the numerical implementation, we solved \eqref{forward_eq_3} using a centered second order finite difference scheme to approximate $\partial_{y}^2$ and the Crank–Nicolson scheme for $\partial_{x}$. This method is well-known to be unconditionally stable \cite[\S 6.3]{Strikwerda-Book-2004}. For $\theta=0$, the amplitude of the numerical solution $v$ to \eqref{forward_eq_3} is shown in Figure \ref{fig.TrueImages} (bottom row) for $L/\lambda = 100$ where $\lambda$ is the wavelength of the probing field. We observe the spreading of the wave field inherent in the propagation of waves at a finite frequency. 

We numerically simulated the probing of the domain of interest with waves at frequencies such that $L/\lambda = 10$ and $L/\lambda = 100$, and using the Ram-Lak filter $h$ implemented using the Fourier transform with respect to $y$. The synthetic measurements and reconstructed images are shown in Figure \ref{fig.ReconSheppLogan} for the Shepp-Logan phantom and in Figure \ref{fig.ReconBrain} for the brain vasculature. Note that even for $L/\lambda = 100$, there is considerable wave spreading contained in the measurements. However, the proposed algorithm is able to back-propagate the wave spreading in order to form a relatively sharp image.

\begin{figure}[ht!]
\includegraphics[width=0.48 \textwidth, trim = -5 -5 -5 -5, clip]{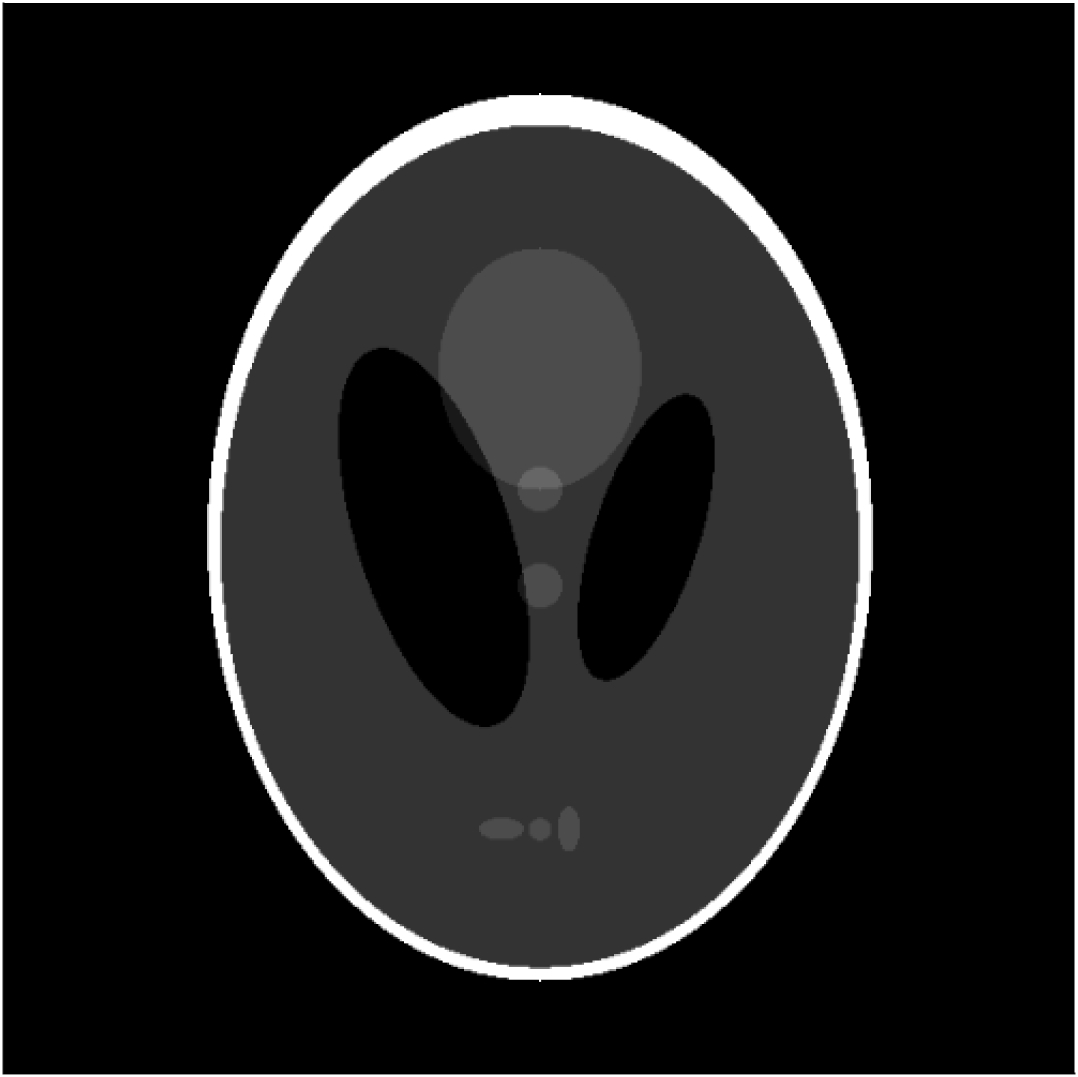} 
\includegraphics[width=0.48 \textwidth, trim = -5 -5 -5 -5, clip]{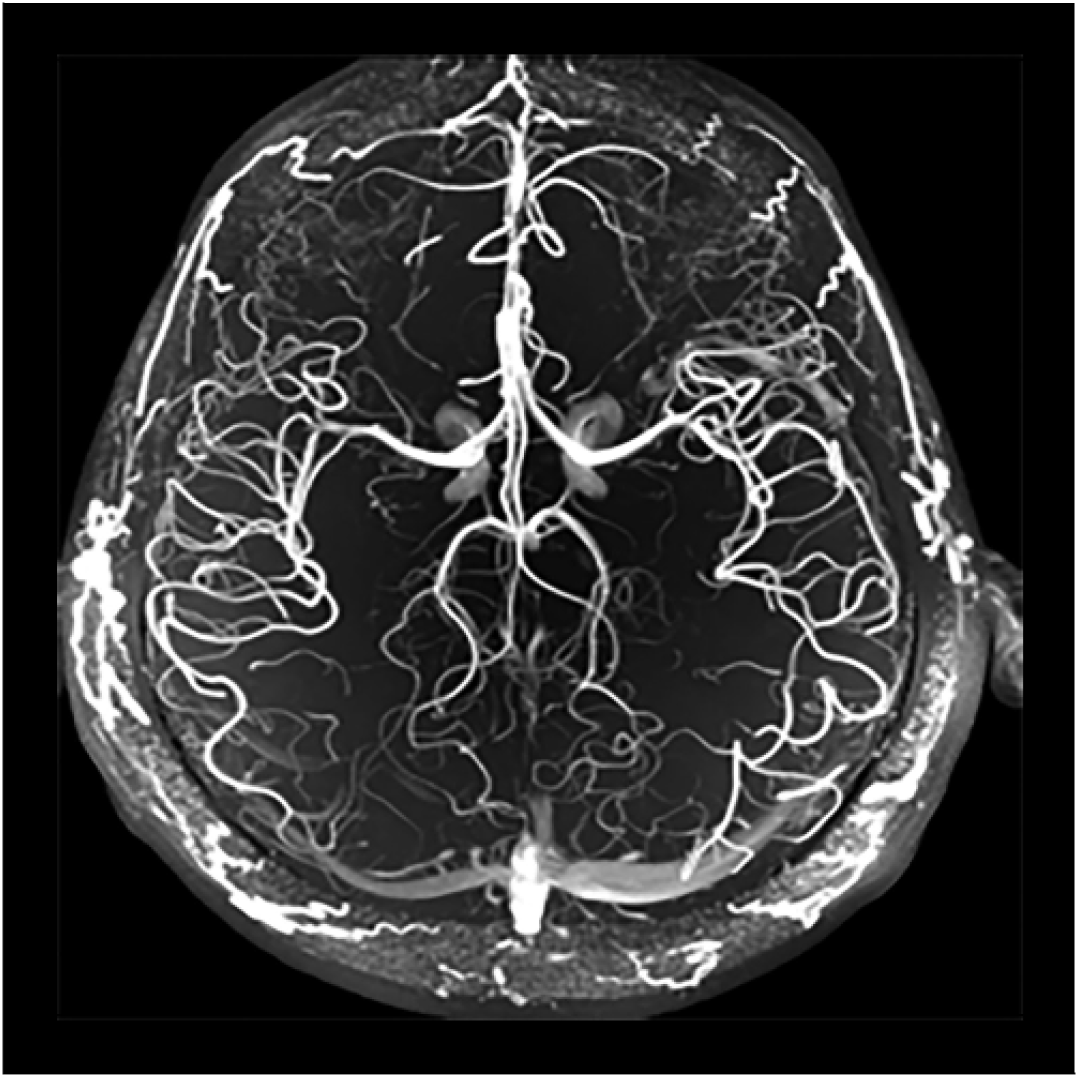} \\
\includegraphics[width=0.48 \textwidth, trim = -5 -5 -5 -5, clip]{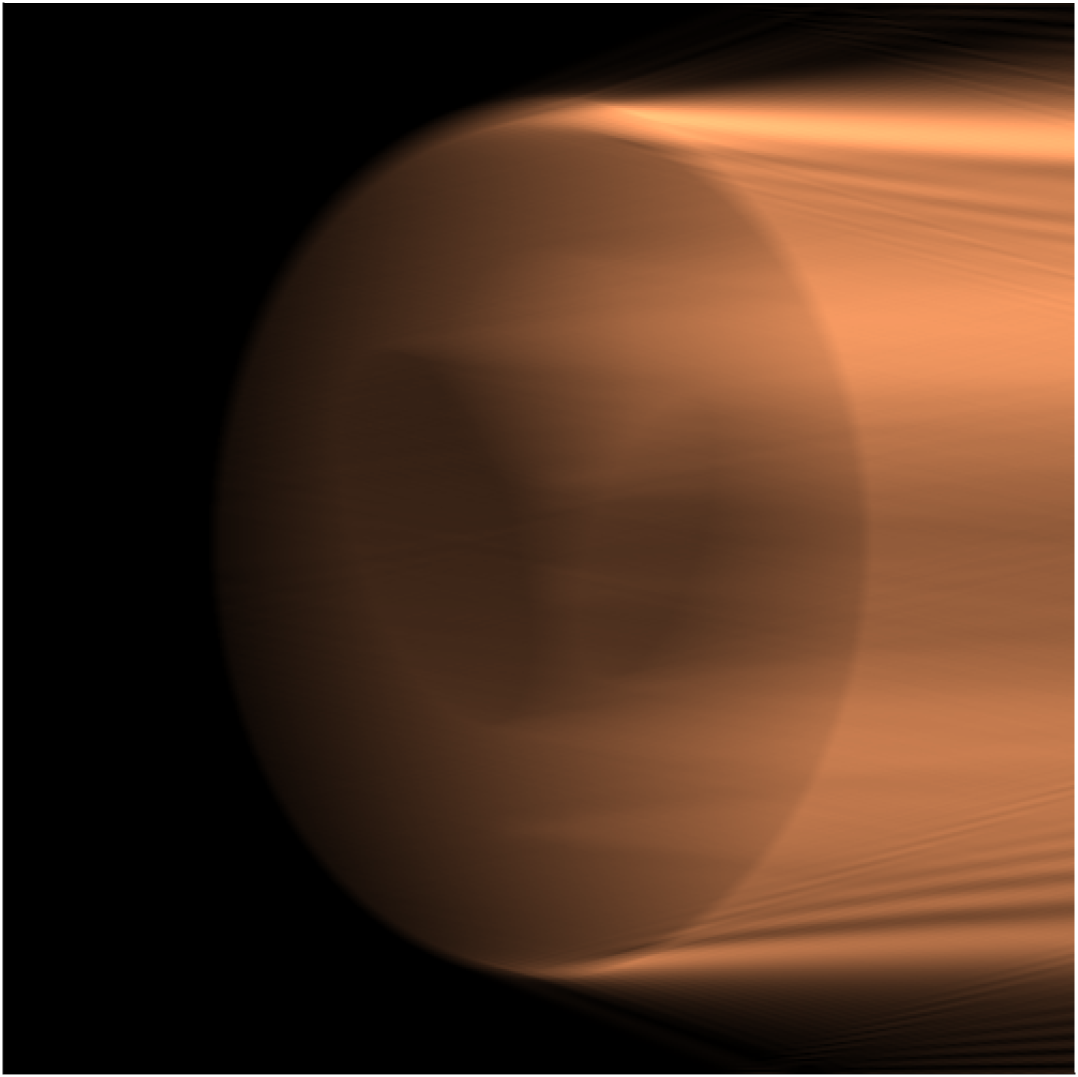} 
\includegraphics[width=0.48 \textwidth, trim = -5 -5 -5 -5, clip]{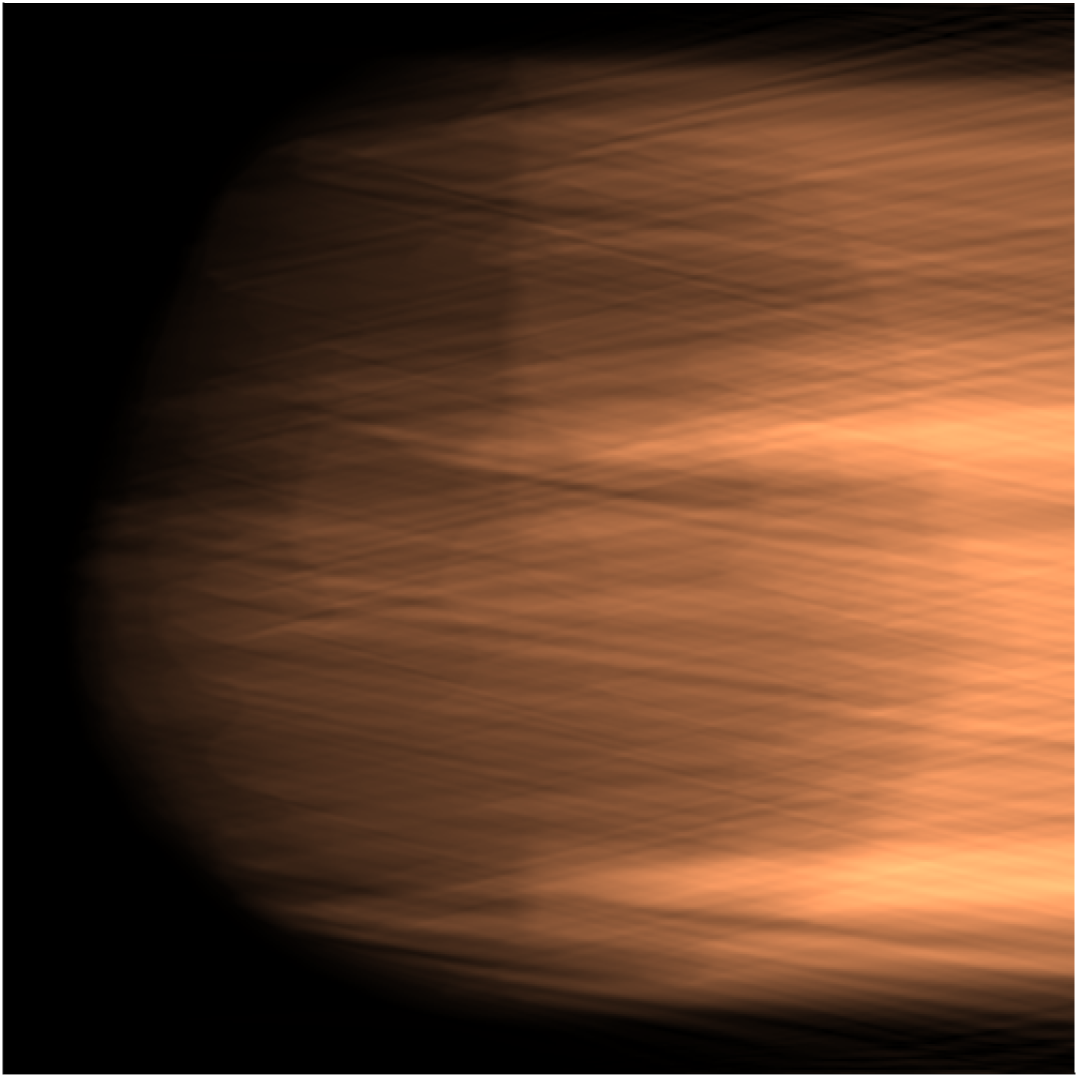}
\caption{True profiles for the Shepp-Logan phantom (top left) and brain vasculature (top right) obtained from \textnormal{\cite{Koroshetz2018}}. The amplitude of the numerical solutions $v$ for $\theta=0$ to \eqref{forward_eq_3} (bottom row) for $L/\lambda = 100$ where $\lambda$ is the wavelength of the probing field.}
\label{fig.TrueImages}
\end{figure}

\begin{figure}[ht!]
\centering
\includegraphics[width=0.48 \textwidth, trim = -5 -5 -5 -5, clip]{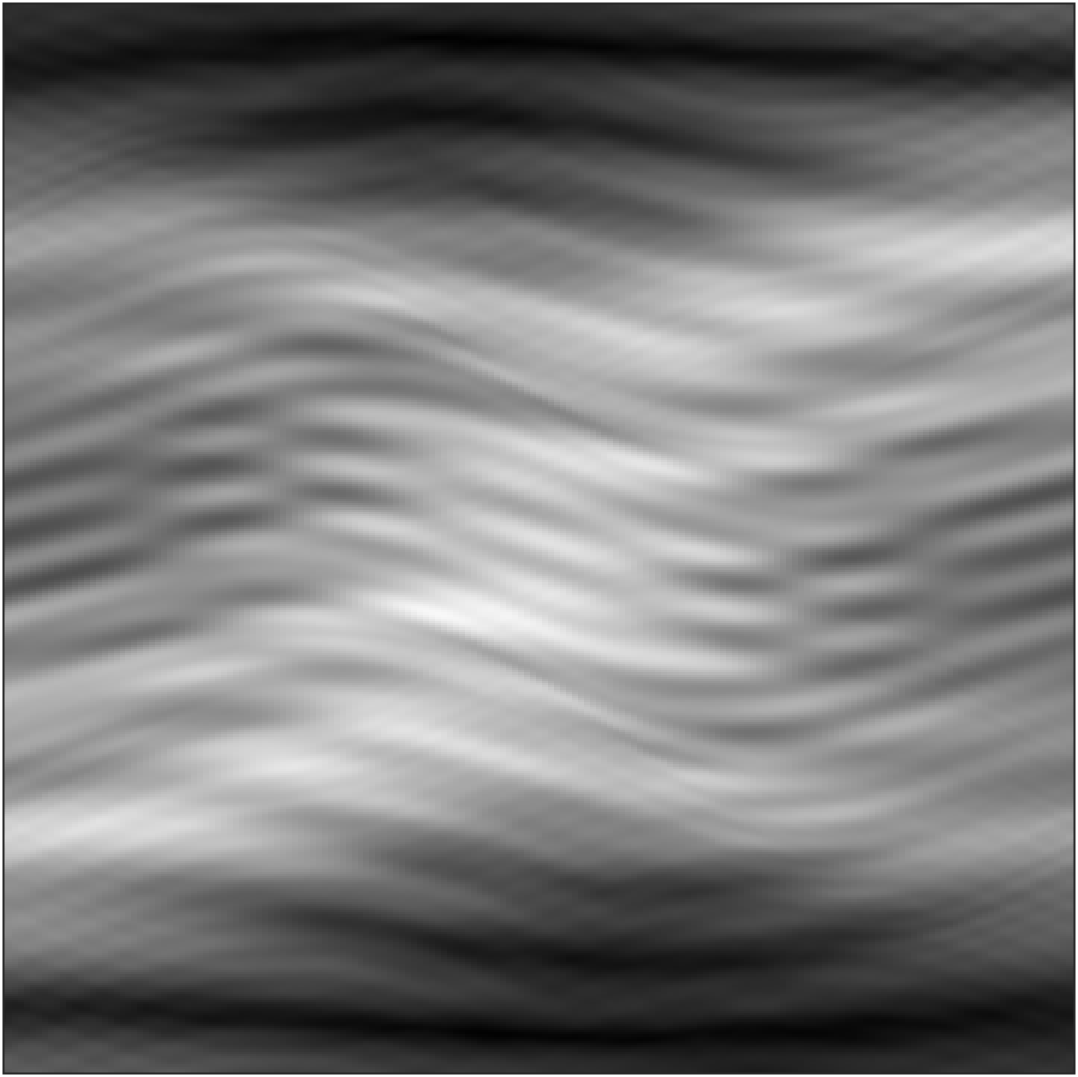} 
\includegraphics[width=0.48 \textwidth, trim = -5 -5 -5 -5, clip]{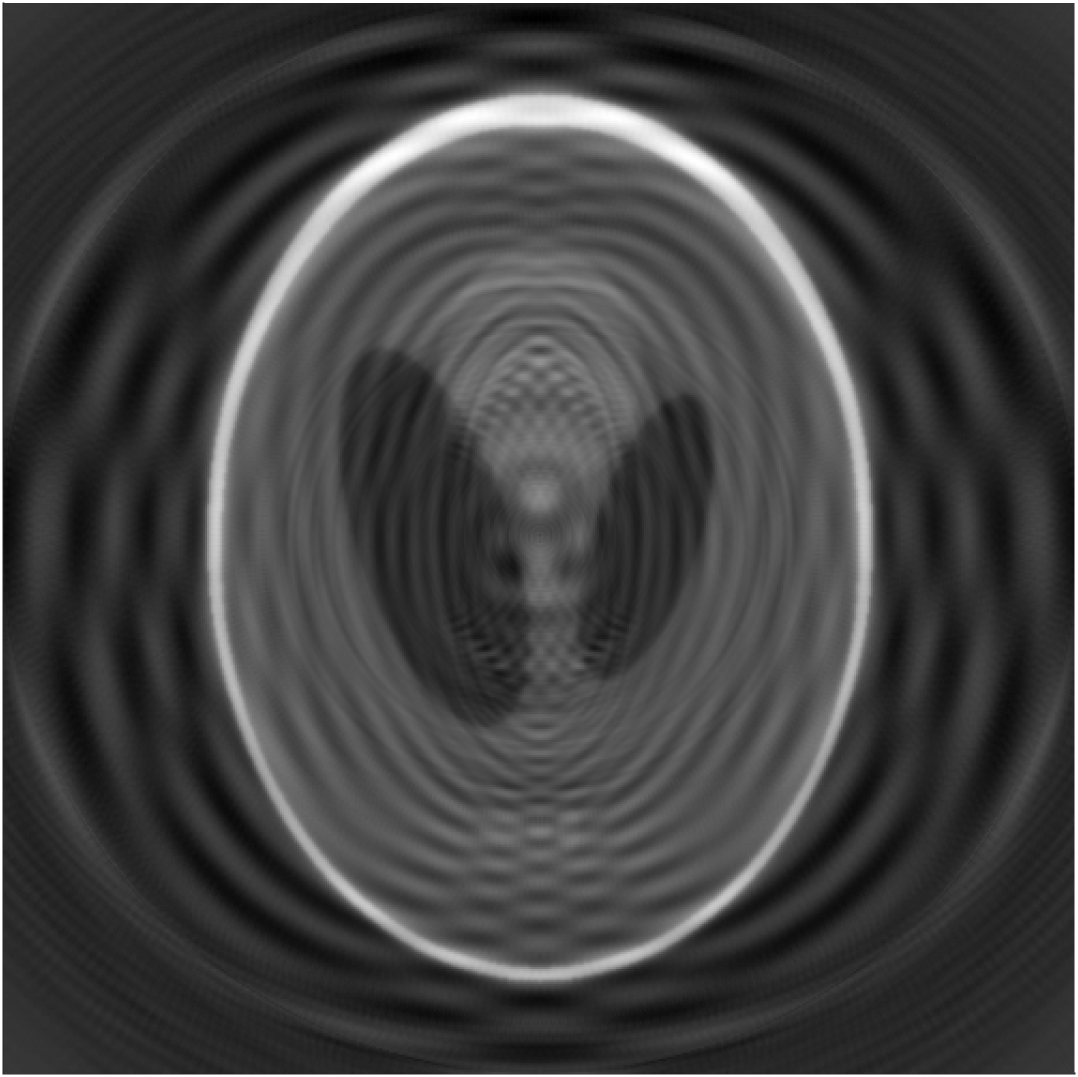} \\
\includegraphics[width=0.48 \textwidth, trim = -5 -5 -5 -5, clip]{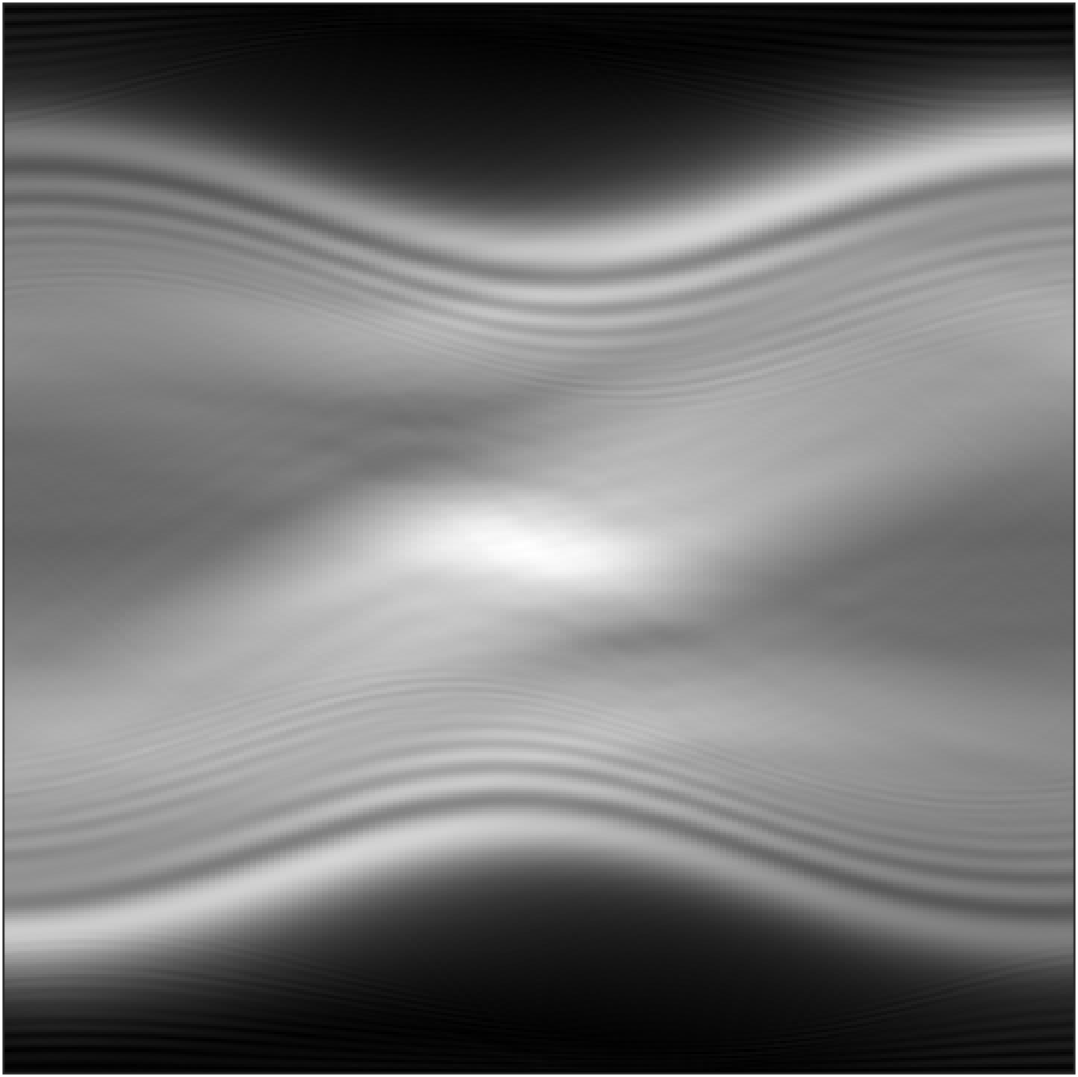} 
\includegraphics[width=0.48 \textwidth, trim = -5 -5 -5 -5, clip]{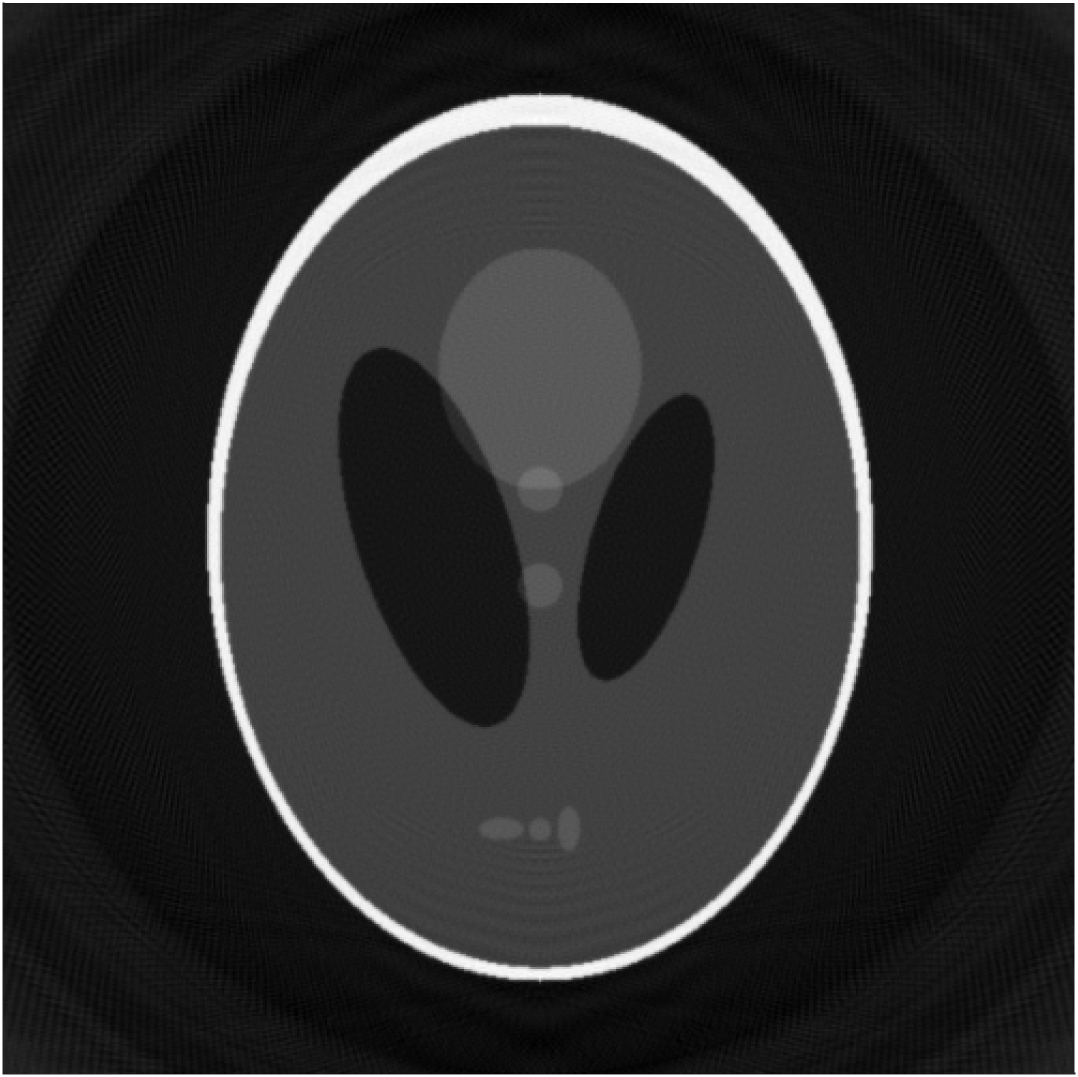}
\caption{Synthetic data using the forward model \eqref{Forward1} and reconstructed profiles using the inversion algorithm \eqref{Propose_algo_1} for 
probing ultrasound waves of frequencies such that $L/\lambda = 10$ (top row) and $L/\lambda = 100$ (bottom row) where $L$ is the size of the image and $\lambda$ is the wavelength.}
\label{fig.ReconSheppLogan}
\end{figure}

\begin{figure}[ht!]
\includegraphics[width=0.48 \textwidth, trim = -5 -5 -5 -5, clip]{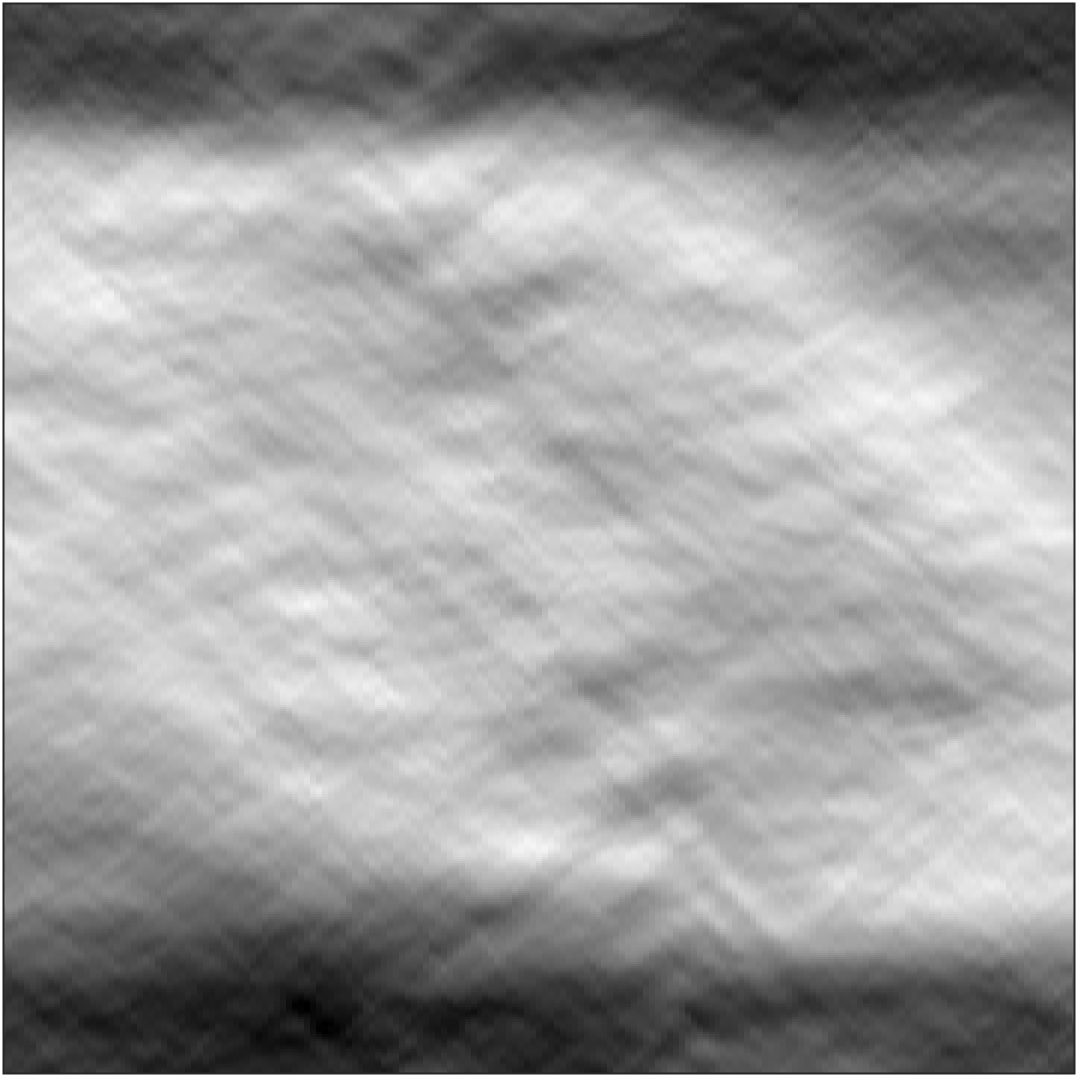} 
\includegraphics[width=0.48 \textwidth, trim = -5 -5 -5 -5, clip]{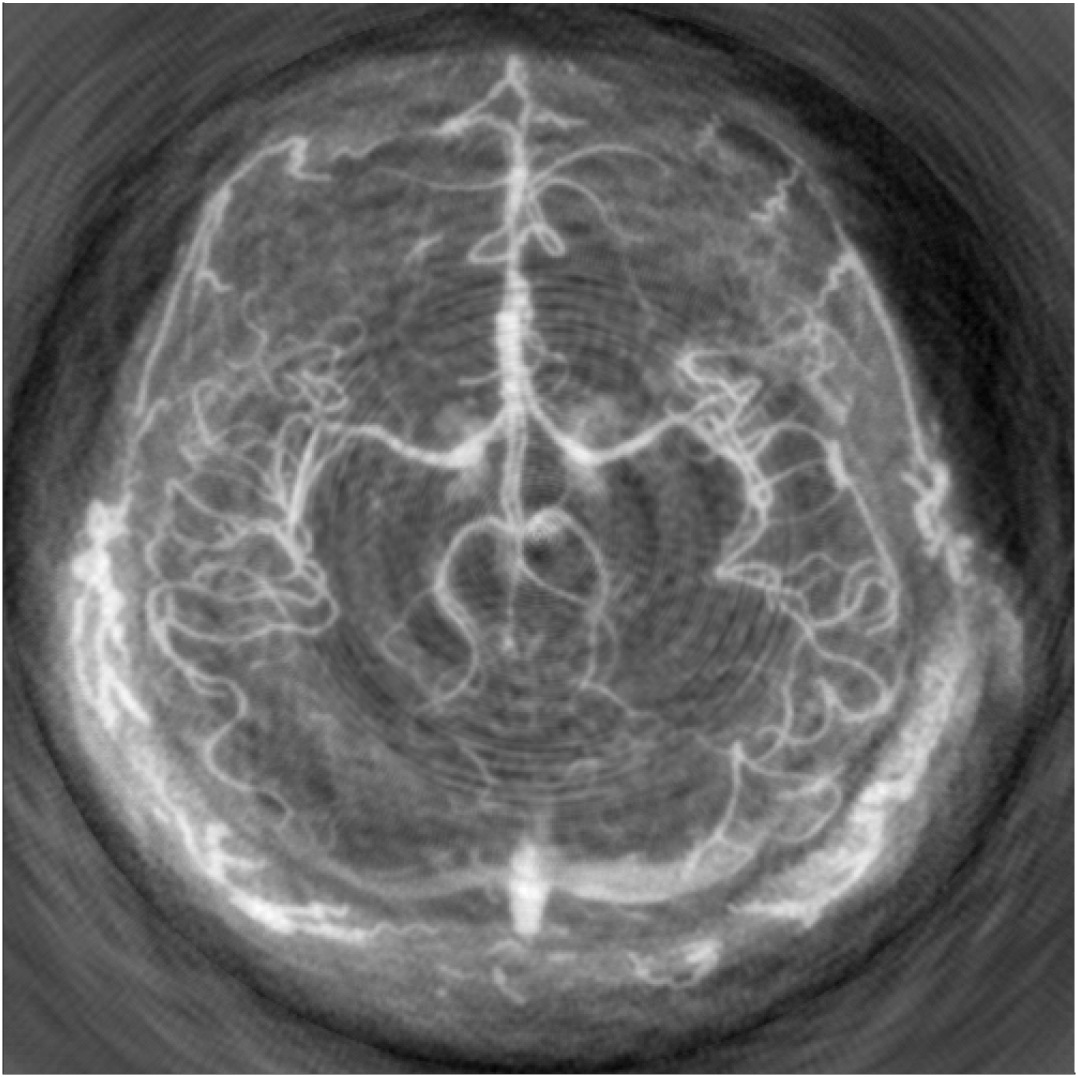} \\
\includegraphics[width=0.48 \textwidth, trim = -5 -5 -5 -5, clip]{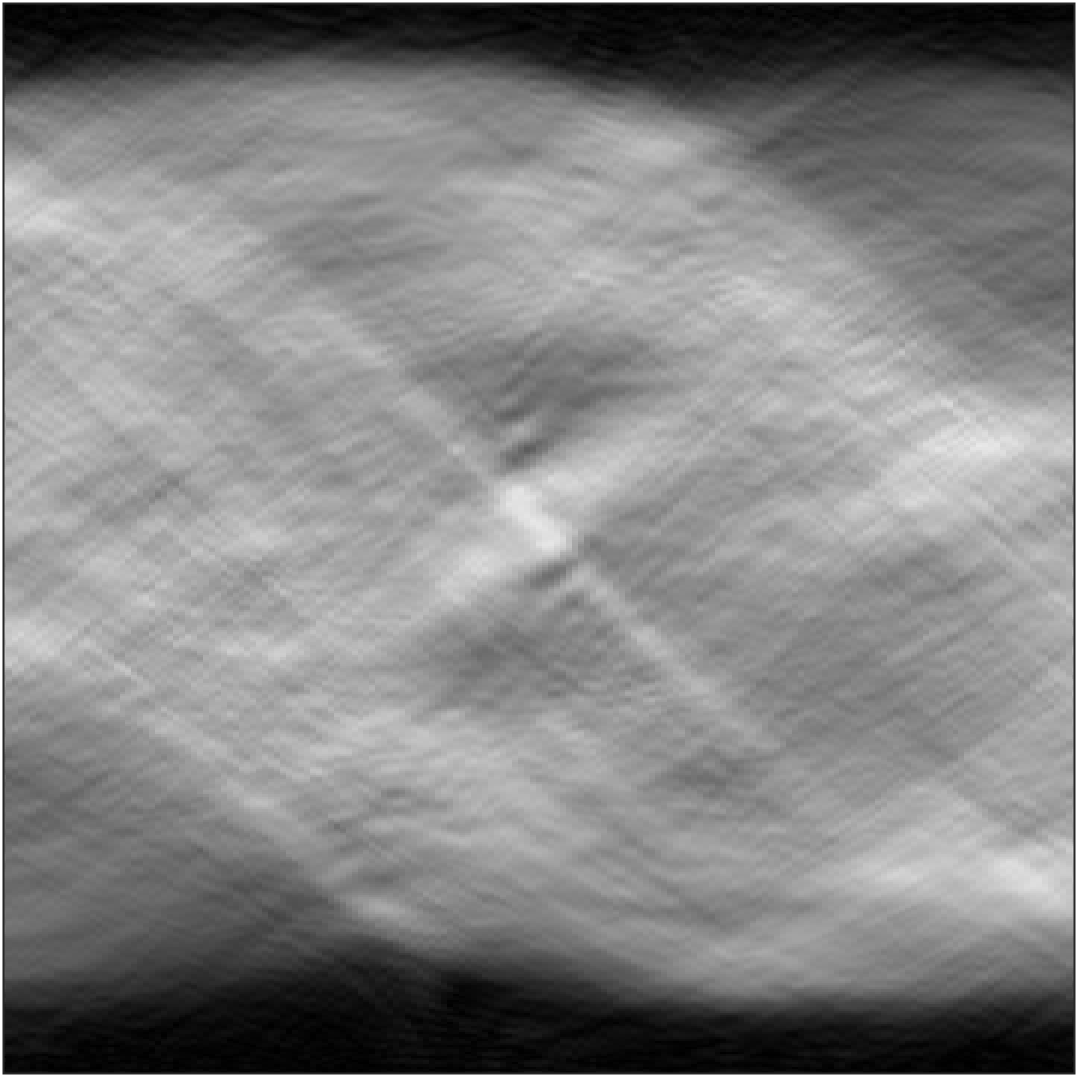} 
\includegraphics[width=0.48 \textwidth, trim = -5 -5 -5 -5, clip]{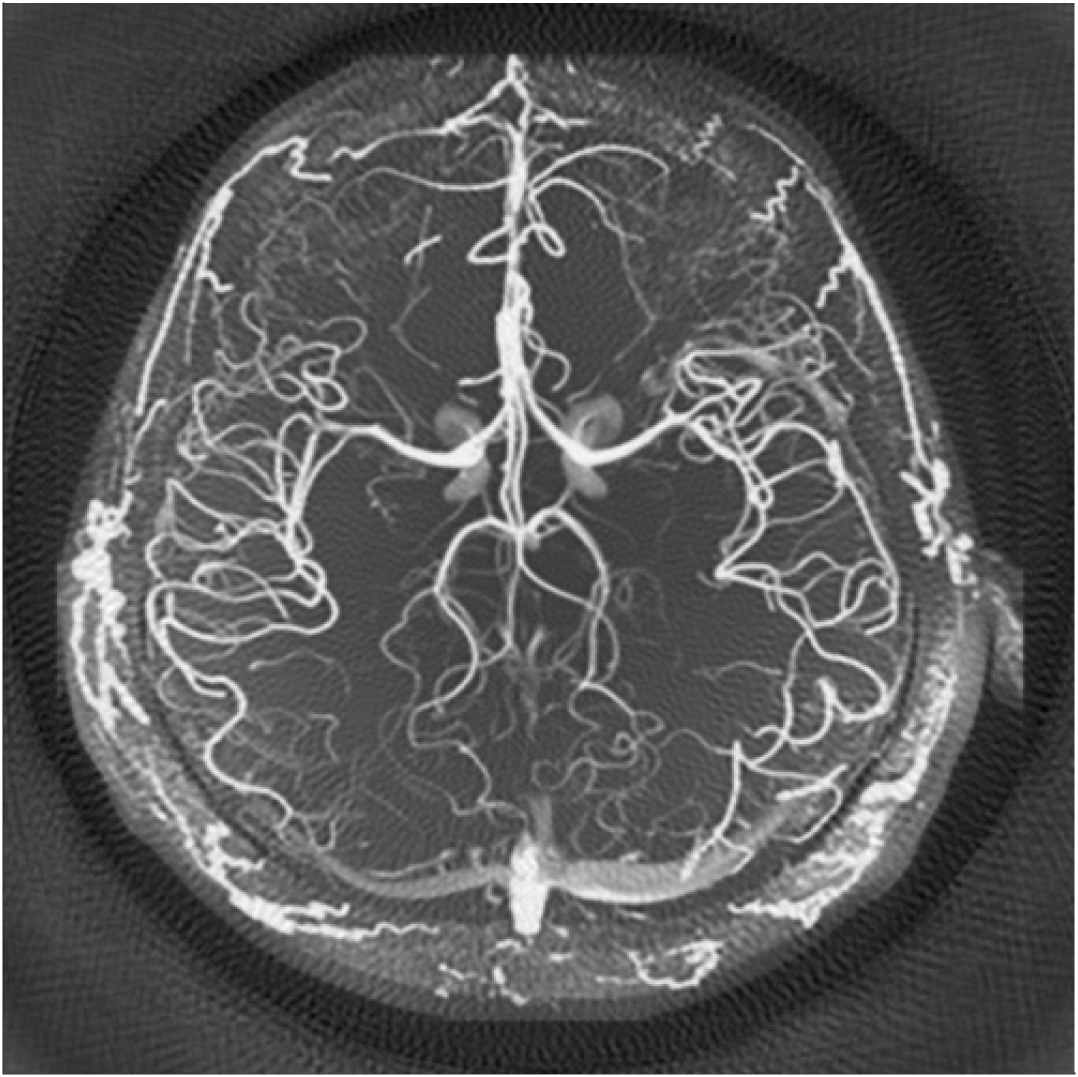} 
\caption{Synthetic data using the forward model \eqref{Forward1} and reconstructed profiles using the inversion algorithm \eqref{Propose_algo_1} for 
probing ultrasound waves of frequencies such that $L/\lambda = 10$ (top row) and $L/\lambda = 100$ (bottom row) where $L$ is the size of the image and $\lambda$ is the wavelength.}
\label{fig.ReconBrain}
\end{figure}

\section{Conclusion} \label{Section.Conclusion}

We have shown that the coefficient of nonlinearity in a Westervelt wave equation can be uniquely recovered from knowledge of the Dirichlet-to-Neumann map. This result supports the reliability of forming quantitative images of blood perfusion using microbubble enhancing agents to trigger the formation of nonlinear ultrasound waves. The nonlinear term in the Westervelt equation has been shown to accurately model the resonant properties of microbubble enhancing agents under the passage of ultrasound waves. See \cite{Mischi2014} and the references therein. We proposed an inversion formula expected to be accurate in the high frequency regime. This formula mimics the inversion of the Radon transform, but takes into account the spreading of the wave beams observed at finite frequencies. As a proof-of-concept, we numerically implemented the reconstruction algorithm and showed its ability to back-propagate wave spreading and form sharp reconstructed images.

\section*{Acknowledgement}
SA would like to thank the Texas Children's Hospital for its support and research-oriented environment. 
GU was partly supported by
NSF,  the Walker Family Endowed Professorship at UW and the Si-Yuan
Professorship at IAS, HKUST. JZ was partially supported by Research Grant Council of Hong Kong (GRF grant 16305018). 
\bibliographystyle{siamplain}
\bibliography{biblio,NonlinearUS}
\end{document}